\newtheorem{theorem}{Theorem}[section]
\newtheorem{lemma}[theorem]{Lemma}
\newtheorem{proposition}[theorem]{Proposition}
\newtheorem{corollary}[theorem]{Corollary}
\theoremstyle{definition}
\newtheorem{definition}[theorem]{Definition}
\newtheorem{remark}[theorem]{Remark}
\numberwithin{equation}{section}
\begin{document}

%\title{Sample article for AAG - Potentials \& PDEs - The Legacy of David R. Adams}
\title{On a variational problem of nematic liquid crystal droplets}

%    Information for first author
\author{Qinfeng Li}
\address{School of Mathematics, Hunan University, Changsha, Hunan, China 410012}
\email{liqinfeng1989@gmail.com}
\thanks{The first author is partially supported by the National Science Fund for Youth Scholars (No. 1210010723)
and the Fundamental Research Funds for the Central Universities, Hunan Provincial Key Laboratory of
intelligent information processing and Applied Mathematics. }

\author{Changyou Wang}
\address{Department of Mathematics, Purdue University, West Lafayette, IN 47907, USA}
\email{wang2482@purdue.edu}
%    \thanks will become a 1st page footnote.
\thanks{The second author is partially supported by NSF grants 1764417 and 2101224.}

\subjclass[2010]{Primary 35J50; Secondary 58E20, 58E30}

%\date{January 1, 1994 and, in revised form, June 22, 1994.}

%\dedicatory{This paper is dedicated to our advisors.}

\keywords{Liquid crystal droplets, M-uniform domains, Outer minimal sets}

\begin{abstract}
Let $\mu>0$ be a fixed constant, and we prove that minimizers to the following energy functional
\begin{align*}
    E_f(u,\Omega):=\int_{\Omega}|\nabla u|^2+\mu P(\Omega)
\end{align*}exist among pairs $(\Omega,u)$ such that $\Omega$ is an $M$-uniform domain with finite perimeter and fixed volume, and $u \in H^1(\Omega,\mathbb{S}^2)$ with $u =\nu_{\Omega}$, 
the measure-theoretical outer unit normal, almost everywhere on the reduced boundary of $\Omega$. The uniqueness of optimal configurations in various settings is also obtained. 
In addition, we consider a general energy functional given by
\begin{align*}
    E_f(u,\Omega):=\int_{\Omega} |\nabla u(x)|^2 \,dx + \int_{\partial^* \Omega} f\big(u(x)\cdot \nu_{\Omega}(x)\big) \,d\mathcal{H}^2(x),
\end{align*}where $\partial^* \Omega$ is the reduced boundary of $\Omega$ and $f$ is a convex positive function on $\mathbb R$. We prove that minimizers
of $E_f$ also exist among $M$-uniform outer-minimizing domains $\Omega$ with fixed volume and $u \in H^1(\Omega,\mathbb{S}^2)$.

\end{abstract}

\maketitle

% Do not use \tableofcontents for creating the TOC
% as it will put in page numbers, which will not be valid
% in the bound volume.
% Write all section titles manually as below.

\section*{Contents}
1. Introduction

2. Prerequisite: sets of finite perimeter and traces of functions

3. {Compactness of $M$-uniform domains}

4. {Existence of equilibrium liquid crystal droplets in Problem A-C}

5. {On the uniqueness of Problem C}

References

\section{Introduction}
\label{Sec1}
In this paper we study the existence of liquid crystal droplets $(\Omega_0, u_0)$, 
consisting of a domain $\Omega_0\subset\mathbb R^3$ representing the shape of a liquid crystal drop and a unit vector field $u_0 \in H^1(\Omega, \mathbb S^2)$ representing
the average orientation field of liquid crystal molecules within the liquid crystal drop $\Omega$, that minimizes  
the total energy functional, including both the elastic energy in the bulk and the interfacial energy defined by 
\begin{eqnarray}
\label{droplet-energy}
E_f(u,\Omega):=\int_{\Omega} |\nabla u(x)|^2 \,dx + \int_{\partial^* \Omega} f\big(u(x)\cdot \nu_{\Omega}(x)\big) \,d\mathcal{H}^2(x),
\end{eqnarray}
among all pairs $(\Omega,u)$, where $\Omega$ is a domain of finite perimeter with a fixed volume that is compactly contained in the ball $B_{R_0}\subset\mathbb R^3$ with center $0$
and radius $R_0$ for some fixed constant $R_0>0$, and $u \in H^1(\Omega,\mathbb S^2)$, which is defined
by
$$H^1(\Omega,\mathbb S^2)\equiv\Big\{v\in H^1(\Omega,\mathbb R^3): \ |v(x)|=1 \ \mbox{a.e.}\ x\in\Omega\Big\}.$$ 
The functional $E_f(u,\Omega)$ should be understood in the sense that the surface integral is taken over the reduced boundary $\partial^* \Omega$ of $\Omega$, $u\ \lfloor_{\partial^* \Omega}$ is the trace of $u$ on $\partial^*\Omega$, 
$\nu_{\Omega}$ is the measure theoretical outer unit normal of $\partial^*\Omega$, and $f$ is usually assumed to have a nonnegative lower bound (with a typical choice of $f(t)=\mu(1+wt^2), t\in [-1,1],$ for some constants $\mu>0$
and $-1<w<1$). 

We will study the following minimization problem of \eqref{droplet-energy}.

\smallskip
\noindent\textbf{Problem A}. Find a pair $(\Omega, u)$ that minimizes $E_f(u,\Omega)$ over all
pairs $(\Omega,u)$ where $\Omega$ is a domain of finite perimeter
in a fixed ball $B_{R_0}\subset\mathbb R^3$, with a fixed volume $V_0>0$, and $u \in H^1(\Omega,\mathbb S^2)$,
when $f:[-1,1]\to\mathbb R$ is a nonnegative, continuous convex function.

We are also interested in the case when there is a constant contact angle condition between the liquid crystal orientation
field $u$ and the reduced boundary of liquid crystal drop $\partial^*\Omega$, i.e., $u \cdot \nu_{\Omega} \equiv c$ on $\partial^*\Omega$,  for some constant $c \in [-1, 1]$. In this case, the energy functional $E_f(u,\Omega)$ in \eqref{droplet-energy}
reduces to
\begin{eqnarray}
\label{droplet-energy1}
{E}(u,\Omega):=\int_{\Omega} |\nabla u(x)|^2 \,dx + \mu\mathcal{H}^{2}(\partial^* \Omega)
\end{eqnarray}
for some constant $\mu\ge 0$. Problem A can be reformulated as follows.

\smallskip
\noindent{\bf{Problem B}}. Find a pair $(\Omega, u)$ that minimizes ${E}(u,\Omega)$ over all
pairs $(\Omega,u)$ where $\Omega$ is a domain of finite perimeter
in a fixed ball $B_{R_0}\subset\mathbb R^3$, with a fixed volume $V_0>0$, and $u \in H^1(\Omega,\mathbb S^2)$ satisfies
$u \cdot \nu_{\Omega} \equiv c$ on $\partial^* \Omega$ for some $c\in [-1,1]$. 

We would like to mention that the contact angle condition in Problem B is referred as 
\begin{itemize}
\item [(i)] the planar anchoring condition when the constant $c=0$, and 
\item [(ii)] the homeotropic anchoring condition when the constant $c=1$.
\end{itemize}

We would like to point out that recently Geng and Lin in a very interesting paper \cite{GL} studied Problem B under the planar anchoring condition (i) in dimension two,
and proved the existence of a minimizer $(\Omega, u)$ such that  the optimal shape $\partial\Omega$ of the droplet
is a chord-arc curve with two cusps, which can be parametrized in $H^{\frac32}$ and has its unit normal vector field $\nu_\Omega$ belongs to VMO. 

Because the homeotropic anchoring condition is an important physical condition, we are also interested in
the following problem.

\smallskip
\noindent{\bf Problem C}. Find a solution to Problem B when the contact angle condition corresponds to $c=1$.  

\bigskip
\noindent\textbf{Motivation}.  The main difficulty of the minimization problems A, B, and C lies in showing
the sequential lower semicontinuity of 
$E_f(u,\Omega)$ (or ${E}(u,\Omega)$) when both domains $\Omega$ and vector fields 
$u\in H^1(\Omega,\mathbb S^2)$ vary. It is even a difficult question to ask whether
the configuration space is closed under weak convergence of liquid crystal pairs $(\Omega, u)$.

In \cite{LP},  under the assumption that 
all admissible domains $\Omega\subset B_{R_0}$ are {\it convex} domains, 
Lin and Poon have proved that there exists a minimizing pair $(\Omega_0,u_0)$ of  Problem A. 
Moreover, $u_0$ enjoys a partial regularity property similar to that of minimizing harmonic
maps by Schoen and Uhlenbeck \cite{SU1, SU2}. It was further proven by \cite{LP} that, up to translations, $(\Omega_0, u_0)=(B_{R}, \frac{x}{|x|})$ is a unique minimizer of Problem C
among convex domains with $|B_R|=V_0$.

We would like to point out that the convexity assumption of admissible domains $\Omega$ plays a crucial role in \cite{LP}, since a minimizing sequence $(\Omega_i, u_i)$ of {\it convex} domains $\Omega_i\subset B_{R_0}$ with $|\Omega_i|=V_0$ has a subsequence $\Omega_{i_k}\rightarrow\Omega$ in $L^1$, 
for some bounded convex domain $\Omega\subset B_{R_0}$ with $|B_{R_0}|=V_0$, such that $\mathcal{H}^2(\partial \Omega_{i_k})\rightarrow
\mathcal{H}^2(\partial \Omega)$ and $\nu_{\Omega_{i_k}}\rightarrow \nu_{\Omega}$ almost everywhere with respect to
a spherical coordinate system\footnote{For example, one can parametrize $\partial\Omega_{i_k}$ and
$\partial\Omega$ over the unit sphere $\mathbb S^2$.}. Moreover, there exists $u\in H^1(\Omega,\mathbb S^2)$
such that $\nabla u_{i_k}\chi_{\Omega_{i_k}}\rightarrow \nabla u \chi_{\Omega}$ weakly in $L^2(\mathbb R^3)$. 
The uniqueness of minimizer of Problem C among convex domains relies on the following important inequalities:\begin{eqnarray}
\label{xin1}
\int_{\Omega} |\nabla u(x)|^2 \,dx \ge \int_{\partial \Omega} H(x) \,d\mathcal{H}^2(x), \ \forall u \in H^1(\Omega, \mathbb S^2) \ {\rm{with}}\  u=\nu_{\Omega}
\ {\rm{a.e.\ on}}\ \partial \Omega,
\end{eqnarray}
and  \begin{eqnarray}
\label{xin2}
\int_{\partial \Omega} H(x) \,d\mathcal{H}^2(x) \ge \sqrt{ 4\pi \mathcal{H}^2 (\partial \Omega})\ \mbox{for
convex $\Omega$}, \  {\rm{equality\ holds\ iff}}\ \Omega =B_R,
\end{eqnarray} 
where $H$ denotes the mean curvature of $\partial \Omega$. In \cite{LP}, \eqref{xin1} is derived for any $\Omega\in W^{2,1}$, while \eqref{xin2} is proven by the Brunn-Minkowski inequality for convex domains. 

In this paper, we would like to relax the convexity assumption from \cite{LP} and investigate
Problems A, B, and C  over a larger class of domains  possibly containing {\it non-convex} domains with {\it less regular} boundaries. The class of domains 
contains Sobolev extension domains with some uniform parameters, as well outer minimal domains. 

The main theorems of this paper arose from the Ph.D. thesis of the first author \cite{Li}. The interested reader can refer to \cite{Li} for more related results.

\medskip
\noindent\textbf{Outline of this paper}:\\
\indent In section 2, we will review certain classes of domains in $\mathbb{R}^n$, including $M$-uniform domains, which are Sobolev extension domains with constants depending on $M$ and $n$;
the outer minimal domains, which are a generalization of convex domains.
% ($n=2$ they are equivalent, see \cite{FF} and see also \cite{LT} where the authors prove any outer minimal set is a generalized Cheeger set). 
%Some definitions and preliminary results related to these two and other weakly regular domains are presented. 

In section 3, we will show in Theorem \ref{maincompact} that, up to a set of measure zero, the $L^1$-limit of $M$-uniform domains is $M$-uniform. A few other results on the relation between $L^1$-convergence and Hausdorff convergence are also derived. 

In section 4, we will establish the weak lower semicontinuity of bulk elastic energy of $(\Omega,u)$ for two classes of domains: a) the admissible sets of $M$-uniform domains,
and b) the admissible sets of outer minimal $M$-uniform domains. It is more subtle to prove the lower semicontinuity of surface energy for Problem A. We will only consider outer minimal sets and our proof is inspired by Reshetnyak's lower semicontinuity theorem (see \cite[Theorem 20.11]{Maggi})
and the perimeter convergence Lemma \ref{heng}. Thus combining the compactness of $M$-uniform domain results and the lower-semicontinuity results, the existence Theorem \ref{bigexistence} on problems A, B and C is proved among these classes of admissible sets.

In section 5, we will apply results by \cite{DHMT}, \cite{FS}, \cite{HI1} and \cite{HI2} to show $(B_R, \frac{x}{|x|})$ is the unique minimizer of Problem C over strictly star-shaped mean convex $C^{1,1}$-domains, $C^{1,1}$-outer minimal sets,
and $C^{1,1}$-revolutionary domains, see Theorem \ref{dafeiji1} and Remark \ref{dafeiji2}. 

\section{Prerequisite: sets of finite perimeter and traces of functions}

We first stipulate some notations. Let $V_0>0$ be the fixed volume in the Problems A, B, C. Since the admissible domains in the problems have this fixed volume, we will use the convention that any minimizing sequences have their diameters larger than a universal constant $c_0=c_0(V_0)>0$ because of the isodiametric inequality (see \cite[Theorem 2.2.1]{EG}).

We will denote by $B_r(x): =\{y \in \mathbb{R}^n: |y-x|<r\}$ and $B_r:=B_r(0)$. Throughout this paper all sets under consideration are contained in a large ball $B_{R_0}$, where $R_0>0$ is fixed.  
For any set $A\subset\mathbb R^n$, denote by $A_{\epsilon}$  the interior $\epsilon$-neighborhood $\{x \in A: B_{\epsilon}(x) \subset A\}$, and $A^{\epsilon}$ the exterior $\epsilon$-neighborhood $\bigcup_{x \in A} B_{\epsilon}(x)$. 
Denote by ${\rm{int}} (A)$  the topological interior part of $A$, $A^c=\mathbb{R}^n \setminus A$, and ${\rm{diam}}(A)$  the diameter of $A$. 
For $0\le d\le n$, $\mathcal{H}^d$ denotes the $d$-dimensional Hausdorff measure in $\mathbb R^n$. Let $d^H(\cdot,\cdot)$ 
denote the Hausdorff distance in $\mathbb R^n$. $P(A;D)$ denotes the distributional perimeter of $A$ in $D\subset\mathbb R^n$. 
For a set $A$ of finite perimeter, let $\nu_A$ denote the measure theoretical outer unit normal of the reduced boundary $\partial^*A$,
and $\mu_A$ denotes the Gauss-Green measure of $A$, that is, $\mu_A=\nu_A \cdot \mathcal{H}^{n-1}\lfloor_{\partial^*A}$.
Denote by $\omega_n$ the volume of unit ball in $\mathbb{R}^n$ and $|A|$ the Lebesgue measure of $A$.. 
For any open set $\Omega\subset\mathbb R^n$ and $u \in BV(\Omega)$, denote by $Du$ the  
distributional derivative of $u$, that is a vector-valued Radon measure, and $\|Du\|(\Omega)$ the total variation of $u$ on $\Omega$.

In this paper, $``\lesssim_c"$ denotes an inequality up to constant multiplier $c>0$.
For any measurable set $E$ and $0\le\alpha\le1$, we define
$$E^\alpha=\Big\{x \in \mathbb{R}^n: \lim_{r\rightarrow0} \frac{|E\cap B_r(x)|}{|B_r(x)|}=\alpha\Big\},$$
and refer $E^1$ and $E^0$  as the measure theoretical interior and exterior part of $E$ respectively.
Denote by $\partial_*E:=\mathbb{R}^n
\setminus(E^0 \cup E^1)$ the measure theoretical boundary of $E$, which is also called the essential boundary. In this paper, we will need the following theorem, due to Federer (see \cite[Chapter 5]{EG}).

\begin{theorem}
\label{Federer}
For any measurable set $E$, if $\mathcal{H}^{n-1}(\partial_* E)< \infty$, then $E$ is a set of finie perimeter. Furthermore, if $E$ is a set of finite perimeter, then $\mathbb{R}^n=E^0 \cup E^1 \cup \partial_* E$, $\partial^* E \subset E^{(1/2)} \subset \partial_*E$, and $\partial^* E =\partial_* E \, ({\rm{mod}}\ \mathcal{H}^{n-1})$.
\end{theorem}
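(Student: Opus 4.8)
The statement consists of two logically independent halves: \emph{Federer's criterion}, namely that $\mathcal H^{n-1}(\partial_* E)<\infty$ forces $E$ to have finite perimeter, and the \emph{De Giorgi--Federer structure theory}, namely that finite perimeter yields the density decomposition together with the $\mathcal H^{n-1}$-a.e. identification of $\partial^* E$ with $\partial_* E$. I would treat the two halves separately.

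For the structure half I would take De Giorgi's rectifiability/blow-up theorem as the engine: for a set of finite perimeter one has $\|D\chi_E\|=\mathcal H^{n-1}\lfloor_{\partial^* E}$, and at every $x\in\partial^* E$ the rescalings $(E-x)/r$ converge in $L^1_{\mathrm{loc}}$ to the half-space $\{y:y\cdot\nu_E(x)\le 0\}$. The half-space limit gives $|E\cap B_r(x)|/|B_r(x)|\to\tfrac12$, so $\partial^* E\subset E^{(1/2)}$. The inclusion $E^{(1/2)}\subset\partial_* E$ is immediate, since density $\tfrac12$ excludes membership in $E^0$ or $E^1$, and the partition $\mathbb R^n=E^0\cup E^1\cup\partial_* E$ into disjoint sets is merely the definition of $\partial_* E$ as the complement of $E^0\cup E^1$. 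The substantive point is $\mathcal H^{n-1}(\partial_* E\setminus\partial^* E)=0$. For this I would establish a \emph{lower perimeter-density bound}: at any $x$ where the density of $E$ is neither $0$ nor $1$, a differential-inequality argument for $r\mapsto|E\cap B_r(x)|$ combined with the relative isoperimetric inequality forces $\limsup_{r\to0}P(E;B_r(x))/r^{n-1}\ge c_n>0$. Since $\|D\chi_E\|$ is concentrated on $\partial^* E$, the standard density-comparison lemma then yields $\mathcal H^{n-1}(\partial_* E\setminus\partial^* E)\le C\,\|D\chi_E\|(\partial_* E\setminus\partial^* E)=0$.

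For Federer's criterion I would argue through one-dimensional sections. Fix a coordinate direction $e_i$ and write points as $(y,t)\in\mathbb R^{n-1}\times\mathbb R$. By Eilenberg's integral-geometric inequality applied to the Lipschitz projection onto $\mathbb R^{n-1}$, the crossing number $\mathcal H^0(\partial_* E\cap\ell_y)$ of the line $\ell_y=\{(y,t):t\in\mathbb R\}$ is integrable, with $\int_{\mathbb R^{n-1}}\mathcal H^0(\partial_* E\cap\ell_y)\,dy\le C\,\mathcal H^{n-1}(\partial_* E)$. A Fubini-type comparison of one- and $n$-dimensional densities shows that, for a.e. $y$, the one-dimensional essential boundary of the section $E_y=\{t:(y,t)\in E\}$ is contained in $\partial_* E\cap\ell_y$, so $t\mapsto\chi_{E_y}(t)$ has finite pointwise variation bounded by that crossing number. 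Integrating over $y$ and summing over the $n$ coordinate directions bounds every directional variation of $\chi_E$; by the sectional characterization of $BV$ this gives $\chi_E\in BV$ with $P(E)\le C\,\mathcal H^{n-1}(\partial_* E)<\infty$.

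The main obstacle in both halves is the passage between the purely measure-theoretic object $\partial_* E$ and the analytically tractable objects $\partial^* E$ and $\|D\chi_E\|$. In the structure half this is the lower perimeter-density estimate feeding the density-comparison lemma; in the criterion it is verifying that the one-dimensional essential boundary of a.e. section is captured by $\partial_* E$, which is precisely where Eilenberg's inequality and the Fubini density comparison do the heavy lifting. De Giorgi's blow-up theorem is the deep input I would lean on most, the remainder being covering arguments and the relative isoperimetric inequality.
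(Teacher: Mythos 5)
The paper never proves this statement: it appears as Theorem~2.1 in the prerequisites section, quoted as a classical theorem of Federer with a citation to Evans--Gariepy, Chapter~5, and is used afterwards as a black box. So there is no internal proof to compare against; what you have produced is a reconstruction of the argument in the very reference the paper points to, and your outline is the standard one: De Giorgi's blow-up theorem and the identity $\|D\chi_E\|=\mathcal H^{n-1}\lfloor_{\partial^*E}$, a lower perimeter-density estimate on $\partial_*E$ fed into the density-comparison lemma, for the structure half; Eilenberg's inequality, slicing, and the sectional characterization of $BV$ for Federer's criterion. The decomposition into two independent halves, and the identification of the substantive point in each (namely $\mathcal H^{n-1}(\partial_*E\setminus\partial^*E)=0$, and the sectional inclusion), are correct.

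Two details deserve a warning. First, the uniform constant $c_n$ you claim in $\limsup_{r\to0}P(E;B_r(x))/r^{n-1}\ge c_n$ for $x\in\partial_*E$ does not exist: in the plane, let $E$ be the union of disjoint disks of radius $\epsilon 4^{-j}$ centered at $(4^{-j},0)$, $j\ge 1$; the origin then lies in $\partial_*E$, yet $\limsup_{r\to0}P(E;B_r(0))/r=O(\epsilon)$. Only positivity of the limsup holds at each point of $\partial_*E$, which still suffices: decompose $\partial_*E\setminus\partial^*E$ into the countably many sets on which the limsup exceeds $1/k$ and apply the density-comparison lemma to each, using that $\|D\chi_E\|$ vanishes off $\partial^*E$. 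Second, the sectional inclusion $\partial_*(E_y)\subset(\partial_*E)_y$ for a.e.\ $y$ is indeed the crux of the criterion, but be aware that it cannot be derived from any pointwise crossing principle: a segment can pass from a point of $E^1$ to a point of $E^0$ without ever meeting $\partial_*E$ (take $E=\{(s,t):|t|>\max(s-1/2,0)^2\}$ and the segment $[0,1]\times\{0\}$; the transition occurs at the cusp tip $(1/2,0)$, which is a density-one point of $E$). So the a.e.-$y$ quantifier does real work, and this is exactly the step on which the cited reference spends its effort; your sketch names the right tools (Fubini plus Eilenberg) but leaves that delicate argument implicit.
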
 

Next, we recall the definition of $M$-uniform domains.
\begin{definition}
\label{uniformdomain}
For $M\ge1$, a domain $\Omega\subset\mathbb R^n$ is called an $M$-uniform domain,
if for any  two points $x,y \in \Omega$, there is a rectifiable curve $\gamma : [0,1] \rightarrow \Omega$
such that $\gamma(0)=x,\ \gamma(1)=y$, and
\begin{eqnarray}
\label{Jones1}
 &\,& \mathcal{H}^1(\gamma([0,1])) \le M|x-y|, \\
&\,&  d(\gamma(t), \partial \Omega) \ge \frac{1}{M} \min\big\{|\gamma(t)-x|,|\gamma(t)-y|\big\}, \, \forall t \in [0,1].
\label{Jones2}
\end{eqnarray}

\end{definition}

\begin{remark} {\rm P. Jones \cite{Jo} introduced the notion of $(\epsilon, \delta)$-domain. One can check
that  any $(\epsilon,\infty)$-domain is an $M$-domain, with $M=\frac{2}{\epsilon}$. On the other hand, any $M$-uniform
domain is a $(\frac{1}{M^2},\infty)$-domain\footnote{Since \eqref{Jones1} and \eqref{Jones2} imply
$$d(\gamma(t),\partial\Omega)\ge \frac{1}{M}\frac{|\gamma(t)-x||\gamma(t)-y|}{\mathcal H^1(\gamma([0,1]))}
\ge \frac{1}{M^2}\frac{|\gamma(t)-x||\gamma(t)-y|}{|x-y|},  \ \forall t\in [0,1].$$}.
It was also proven by \cite{Jo} that any $(\epsilon,\delta)$ domain is a Sobolev extension domain, 
and the converse is true when $n=2$.  We refer to \cite{GO} and \cite{Jo} for more details on $M$-uniform domains.}
\end{remark}

Since we will study minimization problems involving traces of bounded $H^1$ vector fields in this paper, we will need the following Gauss-Green formula.

\begin{theorem}
\label{traceextensiondomain}
Let $\Omega$ be a bounded uniform domain of finite perimeter in $\mathbb{R}^n$ and $u \in H^1(\Omega)\cap L^{\infty}(\Omega)$. Then for any $\phi \in C_0^1(\mathbb{R}^n,\mathbb{R}^n)$, we have 
\begin{eqnarray}
\label{fenbujifen}
\int_{\Omega} u {\rm{div}}\phi +\int_{\Omega} \phi Du =\int_{\partial^* \Omega} (\phi\cdot\nu_{\Omega})u^* d\mathcal{H}^{n-1},
\end{eqnarray}
where  $\nu_{\Omega}$ is the measure-theoretic unit outer normal to $\partial^* \Omega$, and $u^*$ is given by the formula 
\begin{eqnarray}
\label{GGlip}
\lim_{r \rightarrow 0}\displaystyle \frac{\int_{B_r(x) \cap \Omega}|u-u^*(x)|}{r^n}=0,\  \mbox{$\mathcal{H}^{n-1}$-a.e. $x\in \partial^* \Omega$}.
\end{eqnarray}
\end{theorem}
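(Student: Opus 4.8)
The plan is to reduce \eqref{fenbujifen} to the classical De Giorgi--Federer divergence theorem for the set of finite perimeter $\Omega$, by approximating $u$ through mollification of a global extension; the only genuinely delicate point will be the convergence of the boundary integral to the prescribed one-sided trace $u^*$. First I would exploit the hypothesis that $\Omega$ is a bounded uniform domain: by Jones' extension theorem (cf.\ the Remark after Definition \ref{uniformdomain}) $\Omega$ is an $H^1$-extension domain, so $u$ admits an extension $\hat u\in H^1(\mathbb R^n)$ with $\hat u=u$ a.e.\ on $\Omega$. Truncating at the level $T:=\|u\|_{L^\infty(\Omega)}$, i.e.\ replacing $\hat u$ by $\tilde u:=\max\{-T,\min\{T,\hat u\}\}$, produces $\tilde u\in H^1(\mathbb R^n)\cap L^\infty(\mathbb R^n)$ with $\|\tilde u\|_{L^\infty}\le T$ that still coincides with $u$ a.e.\ on $\Omega$ (truncation leaves $u$ unchanged there since $|u|\le T$).

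Next I would mollify, $u_\varepsilon:=\tilde u*\rho_\varepsilon\in C^\infty(\mathbb R^n)$, so that $\|u_\varepsilon\|_{L^\infty}\le T$ and $u_\varepsilon\to\tilde u$, $\nabla u_\varepsilon\to\nabla\tilde u$ in $L^2(\mathbb R^n)$. Since $u_\varepsilon\phi\in C_0^1(\mathbb R^n,\mathbb R^n)$, the Gauss--Green theorem for sets of finite perimeter applied to this vector field gives
\begin{equation*}
\int_\Omega u_\varepsilon\,\mathrm{div}\,\phi\,dx+\int_\Omega\phi\cdot\nabla u_\varepsilon\,dx=\int_{\partial^*\Omega}u_\varepsilon\,(\phi\cdot\nu_\Omega)\,d\mathcal H^{n-1}.
\end{equation*}
Passing to the limit in the two bulk terms is routine: $u_\varepsilon\to u$ and $\nabla u_\varepsilon\to Du$ in $L^2(\Omega)$ while $\mathrm{div}\,\phi$ and $\phi$ are bounded with compact support, so these converge to $\int_\Omega u\,\mathrm{div}\,\phi$ and $\int_\Omega\phi\cdot Du$.

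The hard part is the boundary integral, and this is where I expect the main obstacle. The subtlety is that $u_\varepsilon(x)$ is a \emph{full-ball} average of the extension, whereas $u^*(x)$ in \eqref{GGlip} is a \emph{one-sided} (interior) trace; one must show these agree in the limit $\mathcal H^{n-1}$-a.e.\ on $\partial^*\Omega$. I would argue as follows: since $\tilde u\in W^{1,2}(\mathbb R^n)$, its precise representative $\tilde u^*$ is defined off a set of $2$-capacity zero, hence off an $\mathcal H^{n-1}$-null set (sets of $2$-capacity zero have Hausdorff dimension at most $n-2$; see \cite[Ch.~4]{EG}), and at each such Lebesgue point $r^{-n}\int_{B_r(x)}|\tilde u-\tilde u^*(x)|\to0$. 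Restricting this estimate to $\Omega$ and recalling $\tilde u=u$ on $\Omega$ shows that $\tilde u^*(x)$ satisfies the defining property \eqref{GGlip}; hence $u^*$ exists and equals $\tilde u^*$ for $\mathcal H^{n-1}$-a.e.\ $x\in\partial^*\Omega$, and there $u_\varepsilon(x)\to\tilde u^*(x)=u^*(x)$. Since $|u_\varepsilon|\le T$, $|\phi\cdot\nu_\Omega|\le\|\phi\|_{L^\infty}$, and $\mathcal H^{n-1}(\partial^*\Omega)=P(\Omega)<\infty$, dominated convergence yields $\int_{\partial^*\Omega}u_\varepsilon(\phi\cdot\nu_\Omega)\,d\mathcal H^{n-1}\to\int_{\partial^*\Omega}u^*(\phi\cdot\nu_\Omega)\,d\mathcal H^{n-1}$, and combining the three limits gives \eqref{fenbujifen}. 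The extension-domain hypothesis is essential precisely here: it provides the global $H^1$ extension whose fine (capacity) properties pin down the trace, something unavailable for a general finite-perimeter set.
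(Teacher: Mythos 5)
Your proof is correct, but it takes a genuinely different route from the paper's. Both arguments start the same way, using Jones' extension theorem to produce a global extension $\hat u\in H^1(\mathbb{R}^n)\cap L^\infty(\mathbb{R}^n)$ of $u$; after that the paper stays entirely inside BV theory, citing \cite[Theorem 3.77]{afp} to identify the BV trace of $\hat u$ on $\partial^*\Omega$ with the $u^*$ of \eqref{GGlip}, and \cite[Theorem 3.84]{afp} to obtain the product rule
\begin{equation*}
D(\hat u\chi_\Omega)=D\hat u\lfloor_{\Omega^1}-u^*\nu_\Omega\,\mathcal H^{n-1}\lfloor_{\partial^*\Omega},
\end{equation*}
so that \eqref{fenbujifen} follows at once by testing this measure identity against $\phi$ and using $D\hat u\lfloor_{\Omega^1}=Du\lfloor_\Omega$. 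You instead truncate, mollify, apply the classical De Giorgi--Federer theorem to the smooth fields $u_\varepsilon\phi$, and pass to the limit, handling the boundary term through the fine (capacity) properties of $W^{1,2}$ functions. This works: the full-ball Lebesgue point property of $\tilde u$, restricted to $\Omega$, does yield \eqref{GGlip} with $u^*=\tilde u^*$ (and this value is unique at points of $\partial^*\Omega$, where $\Omega$ has density $1/2$, a point worth stating explicitly); mollifications converge at Lebesgue points; and your dominated convergence step is justified by $|u_\varepsilon|\le T$ and $\mathcal{H}^{n-1}(\partial^*\Omega)=P(\Omega)<\infty$. As for what each approach buys: the paper's argument is shorter modulo the Ambrosio--Fusco--Pallara structure theorems and would apply to any bounded BV extension, while yours is more self-contained (only the classical divergence theorem plus potential theory) and exploits the genuinely $H^1$ regularity of the extension --- it shows the trace is attained as a two-sided, full-ball limit, which is stronger than the one-sided approximate limit and makes transparent why no jump term can appear along $\partial^*\Omega$. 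One small caveat: the capacity citation as written needs $p=2<n$; in the edge case $n=2$ you should run the same argument with $W^{1,p}_{\mathrm{loc}}$ for some $1<p<2$, since sets of $p$-capacity zero are still $\mathcal H^{1}$-null for $p>1$.
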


\begin{proof}
According to \cite{Jo}, we may let $\hat{u} \in H^1_0(\mathbb{R}^n)\cap L^{\infty}(\mathbb{R}^n)$ be an extension of $u$ such that $\hat{u}=u$ in $\Omega$ and
\begin{align*}
    \Vert \hat{u} \Vert _{H^1(\mathbb{R}^n)} \le C(n,\Omega) \Vert u \Vert _{H^1(\Omega)}.
\end{align*}
Hence $\hat{u} \in BV(\mathbb{R}^n)$, and thus according to \cite[Theorem 3.77]{afp}, the interior trace of $\hat{u}$, denoted by $\hat{u}^*$ here, is well-defined 
for $\mathcal{H}^{n-1}$-a.e. on $\partial^*\Omega$, and equals to $u^*$,  given by \eqref{GGlip}, for $\mathcal{H}^{n-1}$-a.e. on $\partial^* \Omega$. Let $\tilde{u}=\hat{u}\chi_\Omega$. Since $\hat{u}$ is bounded, $u^* \in L^1(\partial^* \Omega)$ and thus by \cite[Theorem 3.84]{afp}, $\tilde{u}=\hat{u}\chi_{\Omega} \in BV(\mathbb{R}^n)$, with
\begin{align*}
    D\tilde{u}=D\hat{u}\lfloor_{\Omega^1}-u^*\nu_\Omega\mathcal{H}^{n-1}\lfloor_{\partial^*\Omega}.
\end{align*} Hence for any $\phi \in C_0^1(\mathbb{R}^n,\mathbb{R}^n)$, we have
\begin{align}
\label{ji1}
    \int_{\mathbb{R}^n}\phi D\tilde{u}=\int_{\Omega^1} \phi D\hat{u}-\int_{\partial^* \Omega}(\phi \cdot \nu_{\Omega})u^*\, d\mathcal{H}^{n-1}. 
\end{align}Since
\begin{align*}
    \int_{\mathbb{R}^n}\phi D\tilde{u}=-\int_{\mathbb{R}^n}\tilde{u}\div \phi=-\int_{\Omega}u\div \phi,
\end{align*}from \eqref{ji1} we have
\begin{align}
\label{j2}
     \int_{\Omega}u\div \phi+\int_{\Omega^1}\phi D\hat{u}=\int_{\partial^*\Omega}(\phi\cdot \nu_{\Omega})u^*d\mathcal{H}^{n-1}.
\end{align}
Since $\Omega$ is equivalent to $\Omega^1$ up to a set of Lebesgue measure zero and $\hat{u}\in H^1(\mathbb{R}^n)$, we have
\begin{align}
\label{ji3}
    D\hat{u}\lfloor_{\Omega^1}=D\hat{u}\lfloor_\Omega=Du\lfloor_{\Omega}
\end{align}
Hence \eqref{j2} and \eqref{ji3} imply \eqref{fenbujifen}.
\end{proof}

For the purpose later in this paper, we also introduce the following definition. 

\begin{definition} 
\label{D-c}
For any $c>0$, we denote by $\mathcal{D}_c$ the class of bounded sets in $\mathbb R^n$ such that for any set $E \in \mathcal{D}_c$, 
\begin{eqnarray}
\label{g1}
|B_r(x)\cap E|>cr^n
\end{eqnarray} holds for any $x \in \partial E$ and $0<r<{\rm{diam}}(E)$. 
\end{definition}

%\begin{remark}
%Usually it's more natural to choose $0<r<diam(E)$ in the definition, but since the minimizing sets we are considering have fixed volume thus diameters larger than a fixed constant, the two definitions are the same up to a different universal constant $c$. 
%\end{remark}
Recall that two sets $E, F\subset \mathbb R^n$ are said to be $\mathcal{H}^n$-equivalent, denoted by $E\approx F$,
if $E\Delta F=(E\setminus F)\cup (F\setminus E)$ has zero Lebesgue measure. 
Note that by the Lebesgue density theorem, if $E \in \mathcal{D}_c$, then $|\partial E \cap E^c|=0$. Hence
$\partial E\subset E$ (mod $\mathcal{H}^{n}$) and $\overline{E}\approx E$. In particular, we have

\begin{remark}\label{closure} {\rm Any  $E\in \mathcal{D}_c$  is equivalent to its closure $\overline{E}$.}
\end{remark}

We also have

\begin{remark}
\label{feihua1}{\rm
For $c>0$, if $E\in \mathcal{D}_c$ is a set of finite perimeter, then 
there is $c'>0$ depending only on $c$ and $n$ such that 
for any $x \in \overline{E}$ and $0<r<{\rm{diam}}(E)$, $|B_r(x) \cap E| \ge c'r^n$. } 
\end{remark}

\begin{proof}
For $x \in \overline{E}$ and $0<r<{\rm{diam}}(E)$, there are two cases: \\
(a) If $r \ge 2 d(x, \partial E)$, then there is $z \in \partial E$ such that $B_{\frac{r}{2}}(z) \subset B_{r}(x)$. 
Hence 
$$|B_{r}(x) \cap E| \ge |B_{\frac{r}2}(z) \cap E| \ge c(\frac{r}{2})^n=\frac{c}{2^n} r^n.$$
(b) If $r \le 2 d(x,\partial E)$, then $B_{\frac{r}{2}}(x) \subset E$ and hence
$$|B_{r}(x) \cap E| \ge |B_{\frac{r}2}(x)|=\frac{\omega_n}{2^n}r^n.$$
Hence the conclusion holds with $c'=\min\{\frac{c}{2^n}, \frac{\omega_n}{2^n}\}$.
%This completes the proof.
\end{proof}

%\begin{remark}
%\label{question1}
%Proposition \ref{outside} can also imply Remark \ref{closure}.
%\end{remark}
The following proposition shows that any $M$-uniform domain belongs to $\mathcal{D}_c$ for some $c>0$.
\begin{proposition}
\label{shuyu} For any $M\ge 1$ and $c_0>0$, if $\Omega\subset\mathbb R^n$ is an $M$-uniform domain, 
with ${\rm{diam}} (\Omega) \ge c_0>0$, then $\Omega \in \mathcal{D}_c$ for some $c>0$ depending only on $M$, $n$ and $c_0$.
\end{proposition}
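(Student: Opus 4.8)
The plan is to show that for any $M$-uniform domain $\Omega$ with $\mathrm{diam}(\Omega) \ge c_0$, the volume density lower bound $|B_r(x) \cap \Omega| > c r^n$ holds at every boundary point $x \in \partial\Omega$ for all scales $0 < r < \mathrm{diam}(\Omega)$. The natural strategy is to exploit the $M$-uniform curve condition to produce, at each scale $r$, an interior point at a controlled distance from $x$ that sits at a controlled distance from $\partial\Omega$; this point will anchor a ball of radius comparable to $r$ sitting inside $\Omega$, giving the desired volume bound.

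**First I would** fix $x \in \partial\Omega$ and $0 < r < \mathrm{diam}(\Omega)$. Since $\mathrm{diam}(\Omega) \ge c_0$, I can choose a point $y \in \Omega$ with $|y - x|$ comparable to $\min\{r, c_0\}$ — concretely, pick $y$ so that $|y-x| \approx r/(2M)$ or a similar fixed fraction of $r$, which is possible because the domain extends at least distance $\sim c_0$ and, more to the point, $x$ is a boundary point so nearby interior points exist at every small scale. Because $x \in \partial\Omega$, I approximate $x$ by interior points $x_k \to x$ and apply Definition \ref{uniformdomain} to the pair $(x_k, y)$; passing to the limit, I obtain a curve $\gamma$ from (a point arbitrarily close to) $x$ to $y$ satisfying \eqref{Jones1} and \eqref{Jones2}. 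The key consequence of \eqref{Jones2} is that the midpoint-type point $z := \gamma(t_0)$, chosen where $|\gamma(t_0) - x| = |y - x|/2$ say, satisfies
\begin{align*}
d(z, \partial\Omega) \ge \frac{1}{M}\min\{|z - x|, |z - y|\} \gtrsim_M |x - y| \gtrsim_M r,
\end{align*}
so that $B_{\rho}(z) \subset \Omega$ with $\rho \sim r/M^2$, while simultaneously $|z - x| \lesssim r$ keeps this ball inside $B_r(x)$.

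**Then** the volume estimate follows immediately: $|B_r(x) \cap \Omega| \ge |B_\rho(z)| = \omega_n \rho^n \ge c\, r^n$ with $c = c(M, n, c_0)$ depending only on the stated data, which establishes $\Omega \in \mathcal{D}_c$. **The main obstacle** I anticipate is the bookkeeping needed to guarantee that one can genuinely find an interior point $y$ at distance comparable to $r$ from $x$ uniformly across all scales $0 < r < \mathrm{diam}(\Omega)$, rather than only at small scales. For large $r$ (comparable to $\mathrm{diam}(\Omega)$) this requires knowing the domain is "fat" enough to contain a well-separated pair of points; here the hypothesis $\mathrm{diam}(\Omega) \ge c_0$ together with the isodiametric convention fixed at the start of Section 2 is exactly what rescues the argument, since it forbids the domain from being degenerate. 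A secondary technical point is the limiting argument to handle $x \in \partial\Omega$ itself (where the uniform-curve condition is stated only for interior points): this is routine via compactness of the curves $\gamma_k$, but must be stated carefully so that \eqref{Jones2} survives the limit at the anchor point $z$.
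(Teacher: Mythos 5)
Your proof is correct and follows essentially the same route as the paper: both use the $M$-uniform curve joining (an interior approximation of) $x$ to a point $y\in\Omega$ at distance comparable to $r$, pick a point $z$ on the curve at a fixed fraction of that distance, and apply \eqref{Jones2} to place a ball of radius comparable to $r$ (your $\rho\sim r/M^2$; the paper gets $r/(6M)$ by taking $y\in\Omega\setminus B_{r/2}(x)$ and $z\in\gamma\cap\partial B_{r/3}(x)$) inside $B_r(x)\cap\Omega$, which yields the volume bound. The remaining differences are cosmetic: your approximation $x_k\to x$ handles a point the paper glosses over (the definition is stated for points of $\Omega$, not of $\partial\Omega$), while your concern about needing ${\rm diam}(\Omega)\ge c_0$ at large scales is unnecessary, since $r<{\rm diam}(\Omega)$ alone already provides a point of $\Omega$ outside $B_{r/2}(x)$.
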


\begin{proof}
For any $x \in \partial \Omega$ and $0<r<{\rm{diam}}(\Omega)$, we claim 
that there is a constant $c_1=c_1(M)>0$ such that $B_r(x) \cap \Omega$ contains a ball of radius $c_1r$. 
Indeed, since $0<r<{\rm{diam}}(\Omega)$, there is $y \in \Omega \setminus B_{\frac{r}2}(x)$. Let $\gamma$ be the curve joining $x$ and $y$ given by the definition of $M$-uniform domain. Choose $z \in \partial B_{\frac{r}{3}}(x) \cap \gamma$.
Then we have that $z \in \Omega$ and 
$$d(z, \partial \Omega) \ge \frac{1}{M}\min\big\{|z-x|, |z-y|\big\}\ge \frac{1}{M}\min\big\{\frac{r}{3},
\frac{r}2-\frac{r}3\big\}=\frac{r}{6M}.$$ 
Hence $B_{c_1r}(z) \subset \Omega$, with $c_1=\frac{1}{6M}$. 
From this claim, we see that for any $x \in \partial \Omega$ and any $r<{\rm{diam}}(\Omega)$, 
$$|B_r(x) \cap \Omega| \ge |B_{c_1 r}(z)|\ge \omega_nc_1^nr^n.$$ 
This completes the proof.
\end{proof}

The following remark will be used in the proof of compactness of $M$-uniform domains.
\begin{remark}
\label{bukong}{\rm For $M>0$ and $c_0>0$, 
if $\Omega\subset\mathbb R^n$ is an $M$-uniform domain, with $|\Omega| \ge c_0$, 
then there is $r_0>0$ depending only on $M,n,c_0$ such that $\Omega$ contains a ball of radius $r_0$. }
\end{remark}

\begin{proof} It follows directly from the isodiametric inequality
and Proposition \ref{shuyu}.  
\end{proof}

Similar to $\mathcal D_c$, we also define the class $\mathcal{D}^c$ as follows.
\begin{definition}
\label{D^c}
For $c>0$, the set class $\mathcal{D}^c$  consists of all bounded set $E\subset\mathbb R^n$ such that  
\begin{eqnarray}
\label{g2}
|B_r(x)\cap E^c|>cr^n
\end{eqnarray} holds for any $x \in \partial E$ and $0<r<{\rm{diam}}(E)$. 
\end{definition}

The following proposition from \cite[Proposition 12.19]{Maggi} yields that we can always find an $\mathcal{H}^n$-equivalent set $\widetilde{E}$ of any set $E$ of finite perimeter 
with slightly better topological boundary.
\begin{proposition}
\label{equiv}
For any Borel set $E\subset\mathbb R^n$, there exists an $\mathcal{H}^n$-equivalent set $\widetilde{E}$ of $E$
such that for any $x \in \partial \widetilde{E}$ and any $r>0$, 
\begin{eqnarray}
\label{spt}
0<|\widetilde{E} \cap B_r(x)|<\omega_nr^n.
\end{eqnarray}
In particular, ${\rm{spt}} \mu_E={\rm{spt}}\mu_{\widetilde{E}}=\partial \widetilde{E}$.
\end{proposition}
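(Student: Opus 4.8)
The plan is to construct the good representative $\widetilde{E}$ by excising from, and adjoining to, $E$ the two largest open sets on which $E$ is, respectively, measure-theoretically absent or measure-theoretically full. Concretely, I would set
$$A_0 = \bigcup\{B_r(x) : |B_r(x)\cap E| = 0\}, \qquad A_1 = \bigcup\{B_r(x) : |B_r(x)\setminus E| = 0\},$$
which are open, and by Lindel\"of's theorem each is a countable union of balls of the indicated type, so that $|A_0 \cap E| = 0$ and $|A_1 \setminus E| = 0$. Setting $\widetilde{E} = (E\cup A_1)\setminus A_0$ then produces a set $\mathcal{H}^n$-equivalent to $E$, since $\widetilde E\setminus E\subseteq A_1\setminus E$ and $E\setminus\widetilde E\subseteq E\cap A_0$ are both null; moreover $A_0$ and $A_1$ are disjoint, as a common point would force both $E$ and $E^c$ to be locally negligible there.

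Next I would identify the topological boundary. Since $A_1$ is open and contained in $\widetilde{E}$, we have $A_1 \subseteq \mathrm{int}(\widetilde{E})$, and likewise $A_0 \subseteq \mathrm{int}(\widetilde{E}^c)$; hence $\partial\widetilde{E} \subseteq \mathbb{R}^n \setminus (A_0\cup A_1)$, which says every boundary point $x$ satisfies $|B_r(x)\cap E|>0$ and $|B_r(x)\setminus E|>0$ for all $r>0$. For the reverse inclusion I would use the equivalence $\widetilde E \approx E$: if every ball around $x$ meets both $E$ and $E^c$ in positive measure, the same holds for $\widetilde E$ and $\widetilde E^c$, so $x\in\partial\widetilde E$. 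This gives the exact description $\partial\widetilde{E} = \{x : 0 < |B_r(x)\cap E| < \omega_n r^n \ \forall r>0\}$, from which the bound \eqref{spt} is immediate after rewriting $|B_r(x)\setminus E| = \omega_n r^n - |B_r(x)\cap E|$.

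For the support statement, I would first note $\mu_E = \mu_{\widetilde E}$, because the Gauss-Green measure of a set of finite perimeter depends only on its $\mathcal{H}^n$-equivalence class, whence $\mathrm{spt}\,\mu_E = \mathrm{spt}\,\mu_{\widetilde E}$. The inclusion $\mathrm{spt}\,\mu_{\widetilde E}\subseteq\partial\widetilde E$ follows since on any ball contained in $A_0$ or $A_1$ the set is locally trivial, forcing the perimeter measure to vanish there. The reverse inclusion $\partial\widetilde E\subseteq\mathrm{spt}\,\mu_{\widetilde E}$ is where the substance lies, and I would invoke the relative isoperimetric inequality: for $x\in\partial\widetilde E$ and any $r>0$, both $|B_r(x)\cap E|$ and $|B_r(x)\setminus E|$ are strictly positive, so $\min\{|B_r(x)\cap E|,|B_r(x)\setminus E|\}^{(n-1)/n}\le C(n)\,P(E;B_r(x))$ forces $P(E;B_r(x))>0$, i.e. $x\in\mathrm{spt}\,\mu_E$.

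The main obstacle is precisely this last inclusion: it is the step that passes from purely measure-theoretic density information about $\partial\widetilde E$ to a statement about the perimeter measure, and the relative isoperimetric inequality is the essential tool bridging that gap. The earlier parts are bookkeeping, but care is needed there to ensure that $A_0$ and $A_1$ are genuinely open and sit in the interior of $\widetilde E^c$ and $\widetilde E$ respectively, so that the boundary is captured exactly by the density dichotomy.
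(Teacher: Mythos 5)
Your construction is exactly the paper's: the paper defines the same two open sets (there called $A_1$ and $A_2$, your $A_0$ and $A_1$), sets $\widetilde{E}=(A_2\cup E)\setminus A_1$, and deduces \eqref{spt} from $A_2\subset \mathrm{int}(\widetilde{E})$ and $\overline{\widetilde{E}}\subset \mathbb{R}^n\setminus A_1$, so your proposal is correct and follows essentially the same route. The only difference is that you also prove the support identity $\mathrm{spt}\,\mu_E=\mathrm{spt}\,\mu_{\widetilde{E}}=\partial\widetilde{E}$ via the relative isoperimetric inequality, a step the paper's sketch omits (it is part of the cited result \cite[Proposition 12.19]{Maggi}); that argument is fine, granted the implicit standing assumption that $E$ has locally finite perimeter so that $\mu_E$ is a Radon measure.
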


In order to illustrate the construction of such an equivalent set, which is needed in later sections, we will sketch the proof.
\begin{proof} 
First, we define two disjoint open sets 
$$A_1:=\big\{x \in \mathbb{R}^n\ |\ \mbox{there exists $r>0$ such that $|E \cap B_r(x)|=0$}\big\},$$ 
and 
$$A_2:=\big\{x \in \mathbb{R}^n\ |\ \mbox{there exists $r>0$ such that $|E \cap B_r(x)|=\omega_nr^n$}\big\}.$$ 
Then by simple covering arguments we have
that $|E \cap A_1|=0$ and $|A_2 \setminus E|=0$. 
Set $\widetilde{E}=(A_2 \cup E) \setminus A_1$. Then 
$$|\widetilde{E} \Delta E| \le |A_2 \setminus E|+|E \cap A_1|=0.$$ 
Moreover, since $A_2 \subset {\rm{int}}(\widetilde{E})$ and  $\overline{\widetilde{E}} \subset \mathbb{R}^n \setminus A_1$, 
we have that $\partial \widetilde{E} \subset \mathbb{R}^n \setminus (A_1 \cup A_2)$ and hence \eqref{spt} holds.
\end{proof}

We now recall the notion of outer minimal sets, which can be viewed as a subsolution of 
area minimizing sets. It is a generalization of convex sets, see for example \cite[Definition 15.6]{Giusti} and related results therein.

\begin{definition}
\label{psc}
A set $E\subset\mathbb R^n$ of finite perimeter is an outer minimal set, 
if $P(E) \le P(F)$ holds for any set $F \supset E$. 
\end{definition}

We would like to point out that an outer-minimal set is also called as a pseudo-convex set by \cite{LT}. Thus 
by \cite[Corollary 7.16]{LT} we have
\begin{remark}
\label{dens}
{\rm If $E\subset\mathbb R^n$ is an outer-minimizing and ${\rm{spt}} \mu_E=\partial E$, then $E \in \mathcal{D}^c$, for some $c>0$ depending only on $n$ and $E$. Consequently, $E={\rm{int}}(E)\ ({\rm{mod}}\mathcal{H}^n)$.}
\end{remark}

%\begin{proof}
%See \cite{LT}[Corollary 7.16], where pseudoconvex set is defined exactly the same as the outer minimal set definied here.
%\end{proof}

\begin{remark} {\rm Since the boundary of an outer minimal set (domain) can have positive $\mathcal{H}^n$ measure (see \cite{BGM}),  an outer minimal domain may not be an $M$-uniform domain for any $M\ge 1$.}
\end{remark}

Combining Proposition \ref{shuyu} and Remark \ref{dens}, we have
\begin{remark}
\label{bianjiea}
Let $\Omega$ be an $M$-uniform outer minimal domain with $\rm{spt} \mu_\Omega=\partial \Omega$, then $\Omega \in \mathcal{D}_c\cap \mathcal{D}^c$ for some $c>0$, and hence $\partial_*\Omega=\partial \Omega$.
\end{remark}

We would like to state the following proposition, which is a consequence of \cite[Corollary 1.10]{GHL}, since for any $E \in \mathcal{D}_c$, $\mathcal{H}^{n-1}(\partial E\cap E^0)=0$. 
\begin{proposition}
\label{outside}
Let $c>0$ and $E \in \mathcal{D}_c$. Then there exists bounded smooth sets $E_i$ such that $E_i \Supset E$, $E_i \rightarrow E$ in $L^1$ and $P(E_i)\rightarrow P(E_i)$.
\end{proposition}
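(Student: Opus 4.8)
The plan is to reduce the statement to \cite[Corollary 1.10]{GHL} by checking its sole hypothesis, and to indicate a self-contained construction in case one wishes to avoid the black box. The single fact to extract from membership in $\mathcal{D}_c$ is that the topological boundary of $E$ carries no $\mathcal{H}^{n-1}$-mass inside the measure-theoretic exterior $E^0$.

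First I would record this density consequence. If $x \in \partial E$, then \eqref{g1} gives $|B_r(x) \cap E| > c r^n$ for every $0 < r < \mathrm{diam}(E)$, whence $\liminf_{r \to 0^+} |B_r(x) \cap E|/|B_r(x)| \ge c/\omega_n > 0$; thus $x \notin E^0$. Consequently $\partial E \cap E^0 = \emptyset$, and in particular $\mathcal{H}^{n-1}(\partial E \cap E^0) = 0$. This is precisely the hypothesis under which \cite[Corollary 1.10]{GHL} furnishes bounded smooth sets $E_i \Supset E$ with $E_i \to E$ in $L^1$ and $P(E_i) \to P(E)$, which is the assertion.

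For a direct construction I would use the exterior neighborhoods $E^t = \bigcup_{x \in E} B_t(x)$, each open and containing $\overline E$; since $\overline E \approx E$ by Remark \ref{closure}, one has $E^t \to E$ in $L^1$ as $t \to 0^+$. Writing $d(x) = \mathrm{dist}(x,E)$ and using the coarea formula, $\{d < t\}$ is a set of finite perimeter for a.e.\ $t$ with $|E^t \setminus E| = \int_0^t \mathcal{H}^{n-1}(\{d = s\})\,ds$. The identification of the outer Minkowski content with the perimeter, valid exactly because $\mathcal{H}^{n-1}(\partial E \cap E^0) = 0$, then yields $|E^t \setminus E|/t \to P(E)$, from which I would extract a sequence $t_i \to 0^+$ with $P(\{d < t_i\}) \to P(E)$ (the lower bound coming for free from lower semicontinuity of perimeter under the $L^1$ convergence $\{d<t_i\} \to E$). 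Finally, mollifying each $\chi_{\{d < t_i\}}$ and passing to a suitable superlevel set between $\{d < t_i\}$ and $\{d < 2t_i\}$ via Sard's theorem produces smooth sets compactly containing $\overline E$, and a diagonal argument delivers $E_i \Supset E$ with $E_i \to E$ in $L^1$ and $P(E_i) \to P(E)$.

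The main obstacle is the perimeter convergence for an \emph{outer} approximation. Lower semicontinuity gives $\liminf_i P(E_i) \ge P(E)$ automatically, but the matching upper bound is delicate: if $\partial E$ met $E^0$ on a set of positive $\mathcal{H}^{n-1}$-measure, then enlarging $E$ outward would fill in such points and create perimeter not accounted for by $P(E)$, breaking the convergence. The density lower bound \eqref{g1} defining $\mathcal{D}_c$ is exactly what excludes this phenomenon, which is why membership $E \in \mathcal{D}_c$ is the crucial hypothesis.
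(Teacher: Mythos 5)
Your main argument --- verifying that the density bound \eqref{g1} forces $\partial E \cap E^0 = \emptyset$, hence $\mathcal{H}^{n-1}(\partial E \cap E^0) = 0$, and then invoking \cite[Corollary 1.10]{GHL} --- is exactly the paper's proof, which consists of precisely this observation and citation. The auxiliary Minkowski-content construction is superfluous (and its assertion that $\mathcal{H}^{n-1}(\partial E \cap E^0)=0$ is ``exactly'' what validates the identification of outer Minkowski content with perimeter overstates what that condition alone guarantees in general), but the reduction to \cite{GHL} stands on its own.
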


\section{Compactness of $M$-uniform domains}
In this section, we will establish in Theorem \ref{maincompact} the
$L^1$-compactness property of $M$-uniform domains. We begin with
\begin{lemma}
\label{neijin} For $c>0$, 
suppose that $\{D_i\}\subset\mathcal{D}_c$ satisfies $D_i \rightarrow D$ in $L^1(\mathbb R^n)$
as $i\rightarrow\infty$. 
Then after modifying over a set of Lebesgue measure zero, $D \in \mathcal{D}_c$. Moreover, for any $\epsilon>0$, there is $N=N(\epsilon)>0$ such that for any $i>N$, the following properties hold:\\
(i) $D \subset D_i^{\epsilon}$.\\
(ii) $(D_i)_{\epsilon} \subset D$.\\
(iii) $D_i \subset D^{\epsilon}$. \\
In particular, $d^H(D_i,D) \rightarrow 0$ as $i\rightarrow\infty$. 

\end{lemma}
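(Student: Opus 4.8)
The plan is to work throughout with a canonical representative of the limit. Following the construction in the proof of Proposition \ref{equiv}, let $A_1$ (resp.\ $A_2$) be the open set of points admitting a ball in which $D$ has zero (resp.\ full) density, and replace $D$ by $\tilde D=(A_2\cup D)\setminus A_1$; this is the promised modification on a null set, since $\tilde D\approx D$, and it guarantees $\overline{\tilde D}\subset\mathbb R^n\setminus A_1$, $A_2\subset\mathrm{int}(\tilde D)$, and $\partial\tilde D\subset\mathbb R^n\setminus(A_1\cup A_2)$. Consequently every point of $\overline{\tilde D}$ has positive $D$-mass in every ball, while every $x\in\partial\tilde D$ satisfies $|B_\rho(x)\cap D|>0$ and $|B_\rho(x)\setminus D|>0$ for all $\rho>0$. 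I will repeatedly use the two elementary consequences of $D_i\to D$ in $L^1$, namely $|D_i\Delta D|\to 0$ and $|B_r(x)\cap D_i|\to|B_r(x)\cap D|$ (the latter uniformly in $x,r$). As a preliminary, transferring the positive $D$-mass of two points of $\overline{\tilde D}$ nearly realizing $\mathrm{diam}(D)$ to $D_i$ produces points of $D_i$ near them, whence $\liminf_i\mathrm{diam}(D_i)\ge\mathrm{diam}(D)$; thus the defining density bound of $\mathcal D_c$ for $D_i$ is available at every radius $<\mathrm{diam}(D)$ once $i$ is large.

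To prove $\tilde D\in\mathcal D_c$, fix $x\in\partial\tilde D$, $0<r<\mathrm{diam}(D)$, and $0<\rho<r$. Since $B_\rho(x)$ carries positive mass of both $D$ and $D^c$, so does $B_\rho(x)$ for $D_i$ once $i$ is large, and connectedness of $B_\rho(x)$ forces a point $x_i\in\partial D_i\cap B_\rho(x)$. Applying the definition of $\mathcal D_c$ to $D_i$ at $x_i$ with radius $s=r-\rho$ and using $B_s(x_i)\subset B_r(x)$ gives $|B_r(x)\cap D_i|\ge c s^n=c(r-\rho)^n$; letting $i\to\infty$ and then $\rho\to 0$ yields $|B_r(x)\cap D|\ge c r^n$. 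Hence the modified set lies in $\mathcal D_c$ (after shrinking $c$ infinitesimally if one insists on strict inequality); from now on $D$ denotes this representative.

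For (i) and (iii) it suffices to treat small $\epsilon<\mathrm{diam}(D)$, since $D_i^\epsilon$ and $D^\epsilon$ increase with $\epsilon$. I combine $L^1$-convergence with the uniform interior density bound for members of $\mathcal D_c$ (cf.\ the proof of Remark \ref{feihua1}, which uses only membership in $\mathcal D_c$ and boundedness). If $x\in\overline D$ had $d(x,D_i)\ge\epsilon$, then $B_\epsilon(x)\cap D_i=\emptyset$ while $|B_\epsilon(x)\cap D|\ge c'\epsilon^n$, forcing $|D\Delta D_i|\ge c'\epsilon^n$, which is impossible for large $i$; this gives $\overline D\subset D_i^\epsilon$, hence (i). Statement (iii) is symmetric: if $x\in D_i$ had $d(x,D)\ge\epsilon$, then $B_\epsilon(x)\cap D=\emptyset$, and the uniform density of $D_i$ (legitimate since $\epsilon<\mathrm{diam}(D_i)$ for large $i$) yields $c'\epsilon^n\le|B_\epsilon(x)\cap D_i|\le|D_i\setminus D|$, again impossible for large $i$. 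Combining (i) and (iii) gives $d^H(D_i,D)\le\epsilon$ for large $i$, so $d^H(D_i,D)\to 0$.

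The remaining property (ii) is the main obstacle: the pointwise density estimate above is not uniform in $x$, and one cannot rule out pointwise a thin gap of $D$ that $D_i$ fills. I argue instead by compactness. If (ii) failed for some $\epsilon$, there would be, along a subsequence, points $x_i\in(D_i)_\epsilon\setminus D$ with $x_i\to x_*\in B_{R_0}$. As $B_\epsilon(x_i)\subset D_i$, for large $i$ we have $B_{\epsilon/2}(x_*)\subset D_i$, whence $|B_{\epsilon/2}(x_*)\setminus D|\le|D_i\setminus D|\to 0$; since the left-hand side is independent of $i$ it vanishes, i.e.\ $B_{\epsilon/2}(x_*)\subset D$ up to a null set. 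Therefore $x_*\in A_2\subset\mathrm{int}(D)$, so $x_i\in D$ for large $i$, contradicting $x_i\notin D$. This establishes (ii) with a uniform $N(\epsilon)$ and completes the proof. The only delicate points are the choice of representative in the first paragraph (needed both for the density transfer in the second paragraph and for the conclusion $x_*\in\mathrm{int}(D)$ here) and this compactness argument for (ii).
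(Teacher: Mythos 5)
Your proof is correct, and its skeleton matches the paper's: both pass to the canonical representative of Proposition \ref{equiv}, both prove (ii) by the identical compactness argument (a limit point $x_*$ of $x_i\in(D_i)_\epsilon\setminus D$ acquires a ball of full $D$-density, hence lies in $A_2\subset\mathrm{int}(D)$, contradicting $x_i\notin D$), and both rest on the uniform density bound of Remark \ref{feihua1}. The differences are in execution, and they work in your favor. First, for the membership $D\in\mathcal{D}_c$ the paper asserts $\mu_{D_i}\stackrel{*}{\rightharpoonup}\mu_D$ and selects $x_i\in\mathrm{spt}\,\mu_{D_i}\subset\partial D_i$ converging to $x\in\partial D$; this implicitly treats the $D_i$ as sets of finite perimeter with a uniform perimeter bound, neither of which is assumed in the lemma. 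Your replacement---$|B_\rho(x)\cap D_i|$ and $|B_\rho(x)\setminus D_i|$ are eventually positive because the corresponding quantities for $D$ are positive at $x\in\partial D$ (by the choice of representative), and connectedness of the ball then forces a point of $\partial D_i$ inside $B_\rho(x)$---is elementary and requires no perimeter hypothesis, so it is the more faithful argument for the statement as written (Theorem \ref{maincompact} applies the lemma to $M$-uniform domains, which need not have finite perimeter). Second, you explicitly establish $\liminf_i\mathrm{diam}(D_i)\ge\mathrm{diam}(D)$, which is needed, and silently used in the paper, whenever the defining inequality of $\mathcal{D}_c$ for $D_i$ is invoked at radii up to $\mathrm{diam}(D)$. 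Third, your (i) and (iii) are direct and uniform in $x$ (comparing $c'\epsilon^n$ against $|D\Delta D_i|$), whereas the paper argues by contradiction along subsequences of points; the mathematical content is the same. The one blemish you flag---obtaining $|B_r(x)\cap D|\ge cr^n$ rather than the strict inequality of Definition \ref{D-c}---is shared by the paper's own proof and is harmless (replace $c$ by any $c'<c$, or define the class with a non-strict inequality).
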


\begin{proof} 
We first identify $D$ with its $\mathcal{H}^n$-equivalent set in the sense of Proposition \ref{equiv}. We argue by contradiction.

If (i) were false, then there would exist $\epsilon_0>0$, $x_0 \in D$ and a sequence $k\rightarrow\infty$
such that $B_{\epsilon_0}(x_0) \cap D_k=\emptyset$.  Hence by the hypothesis and Proposition \ref{equiv}, we obtain 
that 
$$0=|B_{\epsilon}(x_0) \cap D_k|\rightarrow |B_{\epsilon}(x_0) \cap D|>0,$$
this is impossible.

If (ii) were false, then there would exist $\epsilon_0>0$ and a sequence of points $x_i \in (D_i)_{\epsilon_0} \setminus D$. 
Assume that $x_i \rightarrow x_0$. Then $x_0 \in \partial D \cup D^c$. Hence by the proof of Proposition \ref{equiv}, we have
that $\omega_n\epsilon_0^n>|B_{\epsilon_0}(x_0) \cap D|$. On the other hand, since $B_{\epsilon_0}(x_i)\subset D_i$, we have that
\begin{eqnarray*}
\big|B_{\epsilon_0}(x_0) \cap D\big|&=&\lim_{i \rightarrow \infty}\big|B_{\epsilon_0}(x_i) \cap D\big|
\ge\liminf_{i \rightarrow \infty} \big(|B_{\epsilon_0}(x_i) \cap D_i|-|D_i \Delta D|\big)\\
&=&\omega_n\epsilon_0^n-\limsup_{i \rightarrow \infty}|D_i \Delta D|=\omega_n\epsilon_0^n.
\end{eqnarray*}
We get a desired contradiction.

If (iii) were false, then there would exist $\epsilon_0>0$ and a subsequence of $x_i \in D_i\setminus D^{\epsilon_0}$. Without loss of generality, assume $x_i \rightarrow x_0$ and thus $x_0 \in \mathbb{R}^n \setminus D^{\epsilon_0}$. By Remark \ref{feihua1}, there is a $c'>0$ depending only on $c$ and $n$ such that 
$$c'\epsilon_0 ^n \le \big|B_{\epsilon_0}(x_i) \cap D_i\big|.$$
On the other hand, it follows from $|B_{\epsilon_0}(x_0) \cap D|=0$
that
\begin{eqnarray*}
\liminf_{i \rightarrow \infty}\big|B_{\epsilon_0}(x_i) \cap D_i\big| 
&\le& \limsup_{i\rightarrow \infty}\big(|B_{\epsilon_0}(x_i) \cap D|+|D \Delta D_i|\big) \\
&\le& |B_{\epsilon_0}(x_0) \cap D|+\limsup_{i \rightarrow \infty} |D_i \Delta D|=0.
\end{eqnarray*}
This yields a desired contradiction.

It remains to show $D \in \mathcal{D}_c$. Indeed, by Proposition \ref{equiv}, $x \in \partial D$ implies
that $x \in {\rm{spt}}\mu_D$. 
Note $D_i \rightarrow D$ in $L^1(\mathbb R^n)$ implies that
$\mu_{D_i} \stackrel{*}{\rightharpoonup} \mu_D$ as convergence of Radon measures. Hence there exists 
$x_i \in {\rm{spt}}\mu_{D_i} \subset \partial D_i$ such that $x_i \rightarrow x$ so that for any $r>0$, it holds
that
$$\big|B_r(x) \cap D\big|=\lim_i \big|B_r(x_i) \cap D\big| 
\ge \liminf_i \big|B_r(x_i) \cap D_i\big|-\limsup_i\big|D_i \Delta D\big| 
\ge cr^n.$$ 
This implies $D \in \mathcal{D}_c$.  
\end{proof}

The following remark follows directly from  (i) and (iii).

\begin{remark}
\label{new2}{\rm 
If $D_i$ and $D$ satisfy the same assumptions as in Lemma \ref{neijin}, and if ${\rm{int}}(D) \ne \emptyset$, 
then ${\rm{int}}(D)$ is connected.}
\end{remark}

%\begin{remark}
%\label{nomcompact1}
%Simple examples can show the $int \tilde{D}$ above may not be the $L^1$ limit of $D_i$in $\mathscr{D}_c$. For instance, we choose an arbitrary small positive number $\epsilon$, a dense countable set $\{x_j\}$ in $B_1$ and $\{r_j\} \subset (0,\epsilon)$ such that $\sum_{j=1}^{\infty} r_j^{n-1} \le 1$ and $B_{r_j}(x_j)$ are disjoint.  Then we further choose $0<\rho_j<r_j$ such that for any $x \in B_1$, $x \in \cup_{\alpha=0}^{1/2}F^{\alpha}$m where $F:=\cup_{j=1}^{\infty}B_{\rho_j}(x_j)$. Now let $E_i=B_1 \setminus \cup_{j=1}^i B_{\rho_j}(x_j)$ and $E=B_1 \setminus F$. Then we conclude $E_i \rightarrow E$ in $L^1$, $E_i \in \mathscr{D}_c$, but $int E$ is empty.
%\end{remark}

Similar to Lemma \ref{neijin},  for a set in the class $\mathcal{D}^c$ we have 
\begin{lemma}
\label{waijin} For $c>0$, if $\{D_i\} \subset \mathcal{D}^c$ and $D_i \rightarrow D$ in $L^1(\mathbb R^n)$, then
after modifying a set of zero $\mathcal{H}^n$-measure, $D \in \mathcal{D}^c$. Moreover, for any $\epsilon>0$, there is  $N=N(\epsilon)>0$ such that if $i>N$, the following properties holds:\\
(i) $D \subset D_i^{\epsilon}$.\\
(ii) $(D_i)_{\epsilon} \subset D$.\\
(iii) $D_{\epsilon} \subset D_i$. 
\end{lemma}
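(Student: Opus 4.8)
The plan is to mirror the proof of Lemma \ref{neijin}, exploiting the fact that the exterior density condition defining $\mathcal{D}^c$ is the complementary analogue of the interior density condition defining $\mathcal{D}_c$: morally, since $(D_\epsilon)^c=(D^c)^\epsilon$ and $E\in\mathcal{D}^c$ says precisely that $E^c$ satisfies an interior lower density bound, assertion (iii) here is the ``complemented'' form of assertion (iii) in Lemma \ref{neijin}. As there, I would first replace $D$ by its $\mathcal{H}^n$-equivalent representative supplied by Proposition \ref{equiv}, so that $0<|B_r(x)\cap D|<\omega_n r^n$ for every $x\in\partial D$ and every $r>0$, and $\partial D=\mathrm{spt}\,\mu_D$.

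The proofs of (i) and (ii) are verbatim the same as in Lemma \ref{neijin}. Inspecting those two arguments, one sees they use only the $L^1$-convergence $D_i\to D$ together with the normalization from Proposition \ref{equiv} applied to the \emph{limit} set $D$ (for (i), that $|B_{\epsilon_0}(x_0)\cap D|>0$ whenever $x_0\in D$; for (ii), that $|B_{\epsilon_0}(x_0)\cap D|<\omega_n\epsilon_0^n$ whenever $x_0\notin\mathrm{int}(D)$), and never the membership $D_i\in\mathcal{D}_c$. Hence they require no modification.

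The genuinely new point is (iii), i.e. $D_\epsilon\subset D_i$ for large $i$. First I would record the exterior analogue of Remark \ref{feihua1}: if $E\in\mathcal{D}^c$, then there is $c'=c'(c,n)>0$ with $|B_r(x)\cap E^c|\ge c'r^n$ for all $x\in\overline{E^c}$ and $0<r<\mathrm{diam}(E)$. This follows from the same two-case split as in Remark \ref{feihua1} with $E$ replaced by $E^c$: if $r\ge 2d(x,\partial E)$ one finds $z\in\partial E$ with $B_{r/2}(z)\subset B_r(x)$ and applies the exterior density at $z$; if $r\le 2d(x,\partial E)$ the ball $B_{r/2}(x)$ misses $\partial E$ and, since $x\in\overline{E^c}$, lies in $E^c$. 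With this in hand I would argue by contradiction: if (iii) fails there are $\epsilon_0>0$ and (along a subsequence) $x_i\in D_{\epsilon_0}\setminus D_i$, so $B_{\epsilon_0}(x_i)\subset D$ while $x_i\in\overline{D_i^c}$. The exterior estimate gives $|B_{\epsilon_0}(x_i)\cap D_i^c|\ge c'\epsilon_0^n$, whereas $B_{\epsilon_0}(x_i)\subset D$ forces
\begin{align*}
|B_{\epsilon_0}(x_i)\cap D_i^c|=\omega_n\epsilon_0^n-|B_{\epsilon_0}(x_i)\cap D_i|\le |D\setminus D_i|\le |D_i\Delta D|\to 0,
\end{align*}
the desired contradiction.

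Finally, for the membership $D\in\mathcal{D}^c$ I would dualize the last paragraph of the proof of Lemma \ref{neijin}. Any $x\in\partial D=\mathrm{spt}\,\mu_D$ is, by the weak-$*$ convergence $\mu_{D_i}\stackrel{*}{\rightharpoonup}\mu_D$ (as in Lemma \ref{neijin}), a limit of points $x_i\in\mathrm{spt}\,\mu_{D_i}\subset\partial D_i$; applying the exterior density of $D_i\in\mathcal{D}^c$ at $x_i$ and passing to the limit yields $|B_r(x)\cap D^c|\ge cr^n$ for every $r\in(0,\mathrm{diam}(D))$, i.e. $D\in\mathcal{D}^c$. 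I expect the only non-routine steps to be the verification of the exterior density estimate above and a mild lower bound on $\mathrm{diam}(D_i)$ ensuring the radii $\epsilon_0,r$ stay admissible, the latter being immediate since $|D|>0$ keeps $\mathrm{diam}(D_i)$ bounded away from $0$.
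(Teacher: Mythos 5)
Your proposal is correct and follows essentially the same route as the paper: the paper gives no separate proof of this lemma, asserting only that it is ``similar to Lemma \ref{neijin}'', and your argument is precisely that adaptation --- reusing the proofs of (i) and (ii) verbatim (which indeed never use the density hypothesis on the $D_i$, only $L^1$-convergence and the normalization of $D$ from Proposition \ref{equiv}), proving the complementary analogue of Remark \ref{feihua1} to get (iii) by contradiction, and dualizing the Gauss--Green measure argument of Lemma \ref{neijin} for the membership $D \in \mathcal{D}^c$. The minor points you flag (admissibility of the radius, i.e. $\epsilon_0 < \mathrm{diam}(D_i)$ for large $i$) are handled correctly, since $D_{\epsilon_0} \neq \emptyset$ forces $|D| \ge \omega_n \epsilon_0^n$ and hence, via the isodiametric inequality, $\mathrm{diam}(D_i) > \epsilon_0$ for large $i$.
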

The following corollary follows directly from Lemma \ref{waijin}.
\begin{corollary}
\label{new1} For any $c>0$ and a sequence $\{D_i\}\subset\mathcal{D}^c$ with uniformly bounded perimeters, 
there is an open set $D \in \mathcal{D}^c$ such that $D_i \rightarrow D$ in $L^1(\mathbb R^n)$. Moreover, $D$ and $D_i$ satisfy the properties {\rm{(}}i{\rm{)}},{\rm{(}}ii{\rm{)}} and {\rm{(}}iii{\rm{)}} of Lemma \ref{waijin}.
\end{corollary}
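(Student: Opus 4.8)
The plan is to obtain $D$ from the $BV$-compactness theorem, then invoke Lemma \ref{waijin}, and finally upgrade the limit to an \emph{open} representative that still lies in $\mathcal D^c$. First I would note that every $D_i$ sits inside the fixed ball $B_{R_0}$, so the characteristic functions $\chi_{D_i}$ have $L^1$-norm at most $|B_{R_0}|$ and total variation $\|D\chi_{D_i}\|(\mathbb R^n)=P(D_i)$, which is uniformly bounded by hypothesis. Hence $\{\chi_{D_i}\}$ is bounded in $BV(\mathbb R^n)$, and the compactness theorem for sets of finite perimeter yields a subsequence (which I relabel as $D_i$) and a set $D$ of finite perimeter with $D_i\to D$ in $L^1(\mathbb R^n)$. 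Lemma \ref{waijin} then applies directly: after modifying $D$ on an $\mathcal H^n$-null set, $D\in\mathcal D^c$ and properties (i)--(iii) hold.

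It remains to arrange that $D$ is open, and this is the only delicate point. I would fix the representative of $D$ provided by Proposition \ref{equiv}, so that ${\rm spt}\,\mu_D=\partial D$ and $0<|B_r(x)\cap D|<\omega_nr^n$ for all $x\in\partial D$ and $r>0$. The defining density bound of $\mathcal D^c$ prevents a boundary point from being a point of density one: for $x\in\partial D$ and $0<r<{\rm diam}(D)$,
$$\frac{|B_r(x)\cap D|}{\omega_nr^n}=1-\frac{|B_r(x)\cap D^c|}{\omega_nr^n}<1-\frac{c}{\omega_n}<1,$$
so $x\notin D^1$ and therefore $\partial D\cap D^1=\emptyset$. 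Since $|D\setminus D^1|=0$ by the Lebesgue density theorem, this gives $|D\cap\partial D|=0$, and because $D\setminus{\rm int}(D)\subset\partial D$ we conclude $|D\setminus{\rm int}(D)|=0$. Thus ${\rm int}(D)$ is open and $\mathcal H^n$-equivalent to $D$ (the degenerate case $|D|=0$ is trivial, taking $D=\emptyset$).

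Finally I would check that replacing $D$ by ${\rm int}(D)$ preserves everything. Since ${\rm int}(D)\subset D$ we have $\partial\,{\rm int}(D)=\overline{{\rm int}(D)}\setminus{\rm int}(D)\subset\overline D\setminus{\rm int}(D)=\partial D$ and ${\rm int}(D)^c\supset D^c$; hence for $x\in\partial\,{\rm int}(D)\subset\partial D$ and $0<r<{\rm diam}({\rm int}(D))\le{\rm diam}(D)$,
$$|B_r(x)\cap{\rm int}(D)^c|\ge|B_r(x)\cap D^c|>cr^n,$$
so ${\rm int}(D)\in\mathcal D^c$. Properties (i) and (iii) are immediate from ${\rm int}(D)\subset D$, while (ii) even improves to $(D_i)_\epsilon\subset{\rm int}(D)$ for large $i$: if $B_\epsilon(x)\subset D_i$ then every $y\in B_{\epsilon/2}(x)$ satisfies $B_{\epsilon/2}(y)\subset D_i$, hence $y\in(D_i)_{\epsilon/2}\subset D$, whence $B_{\epsilon/2}(x)\subset D$ and $x\in{\rm int}(D)$. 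Renaming ${\rm int}(D)$ as $D$ completes the argument. The main obstacle is this openness step — in particular verifying that the interior representative remains in $\mathcal D^c$ — whereas the extraction of the $L^1$-limit is routine.
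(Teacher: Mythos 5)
Your proof is correct and follows essentially the same route the paper intends: the paper simply asserts that the corollary ``follows directly from Lemma \ref{waijin}'' (with the standard $BV$ compactness extraction left implicit), which is exactly your first paragraph. Your remaining work --- passing to the Proposition \ref{equiv} representative, showing via the $\mathcal{D}^c$ density bound that $|D\setminus{\rm int}(D)|=0$, and verifying that ${\rm int}(D)$ stays in $\mathcal{D}^c$ and inherits properties (i)--(iii) --- is a careful filling-in of the openness claim the paper glosses over (and implicitly uses elsewhere, e.g.\ in Remark \ref{dens} and Theorem \ref{maincompact}), not a different method.
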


%\begin{example}(Non-compactness of domains in $\mathscr{D}^c$ of uniformly bounded perimeter)\\
%\label{noncompact}
%With the assumptions in Corollary \ref{new1} and we further assume $D_i$ are domains, then the open set $D$ as in Corollary \ref{new1} may not be a domain. For example, consider $D_i \subset \mathbb{R}^2$ to be $B_1((0,0)) \cup B_1((3,0)) \cup \left((1-\frac{1}{i},2+\frac{1}{i}) \times (-\frac{1}{i},\frac{1}{i})\right)$. Then $D_i \in \mathscr{D}^c$, but $D_i$ converge in $L^1$ to $B_1((0,0)) \cup B_1((3,0)) \cup \left([1,2]\times \{0\}\right)$, which is equivalent to $D:=B_1((0,0)) \cup B_1((3,0))$ in the sense of Proposition \ref{equiv}. Hence the limit $D$ is not equivalent to a domain. In fact, one can see any $L^1$ limit or $d^H$ limit of $D_i$ cannot be $\mathscr{H}^n$ equivalent to a domain. This example also says that with the assumptions as in Lemma \ref{waijin}, $d^H(D_i, D)$ does not converge to zero, which contrasts with Lemma \ref{neijin}.
%\end{example}

Now we are ready to prove the main theorem of this section.

\begin{theorem}
\label{maincompact} For $M>0$,  $R_0>0$, and $c_0>0$,
if $\{\Omega_i\}$ is a sequence of $M$-uniform domains in $B_{R_0}$
such that $|\Omega_i| \ge c_0>0$ and $\Omega_i \rightarrow D$ in $L^1(\mathbb R^n)$, 
then there is an $M$-uniform domain $\Omega$ such that $\Omega_i \rightarrow \Omega$ in $L^1(\mathbb R^n)$.
\end{theorem}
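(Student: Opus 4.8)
The plan is to show that, after replacing $D$ by its $\mathcal{H}^n$-equivalent representative from Proposition \ref{equiv}, the open set $\Omega:=\mathrm{int}(D)$ is the desired $M$-uniform domain. For the setup, note first that since $|\Omega_i|\ge c_0$, the isodiametric inequality gives $\mathrm{diam}(\Omega_i)\ge c_1(n,c_0)>0$, so by Proposition \ref{shuyu} all $\Omega_i$ lie in a single class $\mathcal{D}_c$ with $c=c(M,n,c_0)$. Lemma \ref{neijin} then yields (after a null modification) that $D\in\mathcal{D}_c$, together with the Hausdorff convergence $d^H(\Omega_i,D)\to 0$ and the neighborhood inclusions (i)--(iii). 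The set $\Omega=\mathrm{int}(D)$ is nonempty, since by Remark \ref{bukong} each $\Omega_i$ contains a ball of a fixed radius whose limit sits inside $(\Omega_i)_\epsilon\subset D$, hence in $\Omega$; and $\Omega$ is connected by Remark \ref{new2}. It then remains to verify the two curve conditions of Definition \ref{uniformdomain} and that $\Omega\approx D$.

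For the main step, I pass the uniformity curves to the limit. Fix $x\neq y$ in $\Omega$. Using inclusion (i) I choose $x_i,y_i\in\Omega_i$ with $x_i\to x$, $y_i\to y$, and let $\gamma_i:[0,1]\to\Omega_i$ be the constant-speed parametrization of a simple $M$-uniform curve joining $x_i$ to $y_i$, so that $\mathrm{length}(\gamma_i)=\mathcal{H}^1(\gamma_i([0,1]))\le M|x_i-y_i|$ by \eqref{Jones1}. Thus the $\gamma_i$ are uniformly Lipschitz and, being contained in $B_{R_0}$, uniformly bounded; Arzel\`a--Ascoli produces a subsequence converging uniformly to a Lipschitz curve $\gamma:[0,1]\to\overline{D}$ with $\gamma(0)=x$, $\gamma(1)=y$. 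Lower semicontinuity of length gives $\mathcal{H}^1(\gamma([0,1]))\le\mathrm{length}(\gamma)\le\liminf_i\mathrm{length}(\gamma_i)\le M|x-y|$, which is \eqref{Jones1}. For \eqref{Jones2}, fix $t$ and set $\rho=\tfrac1M\min\{|\gamma(t)-x|,|\gamma(t)-y|\}$; since $\gamma_i(t)\to\gamma(t)$, $x_i\to x$ and $y_i\to y$, the radii $\rho_i=\tfrac1M\min\{|\gamma_i(t)-x_i|,|\gamma_i(t)-y_i|\}$ converge to $\rho$, while \eqref{Jones2} for $\gamma_i$ gives $B_{\rho_i}(\gamma_i(t))\subset\Omega_i$. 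Hence for every $r'<\rho$ and all large $i$ one has $B_{r'}(\gamma(t))\subset(\Omega_i)_\eta$ for a fixed $\eta>0$, so $B_{r'}(\gamma(t))\subset D$ by (ii), and being open it lies in $\Omega$. Letting $r'\uparrow\rho$ gives $B_\rho(\gamma(t))\subset\Omega$, whence $\gamma(t)\in\Omega$ and $d(\gamma(t),\partial\Omega)\ge\rho$. Thus $\gamma$ is admissible and $\Omega$ is $M$-uniform with the same constant $M$.

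To identify the limit in $L^1$, since $\Omega\subset D$ it suffices to prove $|D\setminus\Omega|=0$. I first claim $D\subset\overline{\Omega}$: for $x\in\partial D$ the argument at the end of the proof of Lemma \ref{neijin} produces boundary points $w_i\in\partial\Omega_i$ with $w_i\to x$, and the corkscrew estimate in the proof of Proposition \ref{shuyu} places inside each ball $B_r(x)$ a ball of $\Omega_i$ of radius comparable to $r$, which by (ii) lies in $\Omega$ for large $i$; hence $B_r(x)\cap\Omega\neq\emptyset$ for all $r$. Consequently $D\setminus\Omega\subset\overline{\Omega}\setminus\Omega=\partial\Omega$. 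Since the $M$-uniform domain $\Omega$ is a Sobolev extension domain by \cite{Jo}, it has Lebesgue-negligible boundary, so $|D\setminus\Omega|=0$ and therefore $\Omega_i\to\Omega$ in $L^1(\mathbb{R}^n)$, completing the proof.

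The step I expect to be the main obstacle is the simultaneous passage of both defining inequalities of $M$-uniformity to the moving-domain limit. The length bound \eqref{Jones1} is routine lower semicontinuity, but the interior distance condition \eqref{Jones2} is the delicate point, since it must be transferred from the varying domains $\Omega_i$ to the fixed limit $\Omega$. The mechanism that makes this work is the reformulation of \eqref{Jones2} as an explicit interior-ball inclusion $B_{\rho_i}(\gamma_i(t))\subset\Omega_i$, which is stable precisely under the neighborhood convergence $(\Omega_i)_\epsilon\subset D\subset\Omega_i^\epsilon$ furnished by Lemma \ref{neijin}.
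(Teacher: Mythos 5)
Your proof is correct, and while it follows the paper's skeleton (take $\Omega=\mathrm{int}(D)$, use Remark \ref{bukong} and Lemma \ref{neijin} for nonemptiness and the neighborhood inclusions, then verify $M$-uniformity by a limiting argument), the mechanism you use for the crucial limit passage is genuinely different. The paper never takes a limit of the parametrized curves $\gamma_i$ themselves: for each $N$ it builds an \emph{approximate} curve $\gamma^N\subset\Omega$ (two segments glued to one $\gamma_i$) satisfying the Jones conditions only with degraded constant $\frac1M-\frac1N$ and additive errors, and then recovers an exact $M$-uniform curve via Blaschke compactness of continua in the Hausdorff metric, Golab's semicontinuity theorem \cite[Theorem 3.18]{Fa}, and the path-connectivity lemma \cite[Lemma 3.12]{Fa}. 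You instead pass the curves of $\Omega_i$ directly to the limit by Arzel\`a--Ascoli on constant-speed parametrizations, getting \eqref{Jones1} from lower semicontinuity of length and \eqref{Jones2} from stability of the interior-ball inclusions $B_{\rho_i}(\gamma_i(t))\subset\Omega_i$ under Lemma \ref{neijin}(ii); this is more direct (one limit instead of two, the exact constant $M$ throughout, no Golab), and you also prove $D\subset\overline{\Omega}$ explicitly, a point the paper merely asserts. Working with sets rather than parametrizations is precisely what the paper's Golab route buys: your route is not free of the same Falconer-type input, since the reduction to \emph{simple} arcs is what makes $\mathrm{length}(\gamma_i)=\mathcal{H}^1(\gamma_i([0,1]))\le M|x_i-y_i|$ true --- for a non-injective parametrization the length can exceed the $\mathcal{H}^1$ of the image, and without this Arzel\`a--Ascoli has no uniform Lipschitz bound. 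So state and cite that reduction (a simple rectifiable arc joining two points inside a continuum of finite $\mathcal{H}^1$, along which the pointwise condition \eqref{Jones2} is inherited), e.g. via \cite[Lemma 3.12]{Fa}.

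One small repair at the end: the claim that Sobolev extension domains have Lebesgue-negligible boundary is not in \cite{Jo}; it is a later, nontrivial theorem, and you do not need it. Since $\Omega$ is $M$-uniform with $\mathrm{diam}(\Omega)>0$, Proposition \ref{shuyu} gives $\Omega\in\mathcal{D}_c$, and because $\Omega$ is open the Lebesgue density theorem (as in the discussion preceding Remark \ref{closure}) yields $|\partial\Omega|=|\partial\Omega\cap\Omega^c|=0$; hence $|D\setminus\Omega|=0$ and $\Omega_i\to\Omega$ in $L^1(\mathbb{R}^n)$ as desired.
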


\begin{proof} As in Proposition \ref{equiv}, we assume ${\rm{spt}} \mu_D=\partial D$. 
We first prove that ${\rm{int}}(D) \ne \emptyset$. Indeed, notice that by Remark \ref{bukong}, 
there exists a $r_0>0$ depending only on $c_0,n$ and $M$ such that each $\Omega_i$ contains a ball of radius $r_0$. Therefore, for each $\Omega_i$, if $\epsilon<\frac{r_0}2$, then by definition $(\Omega_i)_{\epsilon}$ contains a ball of radius $\frac{r_0}2$. By Lemma \ref{neijin} (ii), $D$ also contains a ball of radius $\frac{r_0}2$ and hence
${\rm{int}}(D) \ne \emptyset$.

Set $\Omega={\rm{int}}(D)$. It suffices to show that $\Omega$ is an $M$-uniform domain, since the $L^1$ convergence 
of $\Omega_i$ to $\Omega$ follows directly from Remark \ref{closure}, Proposition \ref{shuyu},
and the fact $\Omega \subset D \subset \overline{\Omega}$.

Fix any $x,y \in \Omega$, then given any $N>>M$, say $N>2M$, we may choose $0<\epsilon<\frac{1}{N}$ so small that $k\epsilon < d(x, \partial \Omega) \le (k+1)\epsilon$, $k>>N$ (say $k>(1+1/M)(N+1)$), and $|x-y|>2(N+1)\epsilon$. From Lemma \ref{neijin} (i) and (iii), and since $int(\Omega) \ne \emptyset$, we know that $d^H(\Omega_i, \Omega) \rightarrow 0$, hence we we may choose $x_i, y_i \in \Omega_i \cap \Omega$, with $|x_i-x|<\epsilon, |y_i-y|<\epsilon$ for $i$ large. By Lemma \ref{neijin} (ii), we may also choose $i$ large such that 
\begin{eqnarray}
\label{cru}
(\Omega_i)_{\epsilon} \subset \Omega.
\end{eqnarray} Also we choose $\gamma_i \subset \Omega_i$ to be the rectifiable curve connecting $x_i$ and $y_i$ in $\Omega_i$ as in the definition of $M$-uniform domain. For any $p \in \gamma_i$, if $p \in B_{N \epsilon}(x_i) \cup B_{N\epsilon}(y_i)$, then clearly $p \in B_{(N+1) \epsilon}(x) \cup B_{(N+1)\epsilon}(y) \subset \Omega$. Moreover, this implies
\begin{equation}
\label{a1}
d(p, \partial \Omega) \ge k\epsilon-(N+1) \epsilon>\frac{1}{M}(N+1)\epsilon\ge \frac{1}{M}\min\{|p-x|,|p-y|\}.
\end{equation}
Clearly \eqref{a1} also holds for any $p$ on the line segment between $x_i$ and $x$, and between $y_i$ and $y$.

If $p \notin B_{N \epsilon}(x_i) \cup B_{N\epsilon}(y_i)$, then $d(p, \partial \Omega_i) \ge \frac{1}{M} \min\{|p-x_i|,|p-y_i|\}> \frac{1}{M} N\epsilon$, thus $p \in (\Omega_i)_{N\epsilon/M} \subset (\Omega_i)_{\epsilon}\subset \Omega \cap \Omega_i$. Moreover, let $r=d(p,\partial((\Omega_i)_{\epsilon}))$, then by \eqref{cru}, $B_r(p) \subset \Omega$, so $d(p,\partial \Omega) \ge r=d\left(p, \partial ((\Omega_i)_{\epsilon})\right) \ge d(p,\partial \Omega_i)-\epsilon$. Therefore,
\begin{equation}
\label{a3}
\frac{d(p,\partial \Omega)}{\min\{|p-x_i|,|p-y_i|\}} \ge \frac{d(p,\partial \Omega_i)-\epsilon}{\min\{|p-x_i|,|p-y_i|\}}\ge \frac{1}{M}-\frac{\epsilon}{N\epsilon}\ge \frac{1}{M}-\frac{1}{N}.\\
\end{equation}
Hence by the choice of $\epsilon$ and $N$ we have that
\begin{equation}
\label{a2}
d(p,\partial \Omega) \ge (\frac{1}{M}-\frac{1}{N})(\min\{|p-x|,|p-y|\}-\epsilon) \ge(\frac{1}{M}-\frac{1}{N})(\min\{|p-x|,|p-y|\})-\frac{1}{MN}.
\end{equation}

Therefore, we may let $\gamma^N$ be the curve with three parts. The first part connects $x$ and $x_i$ with line segment, the second part connects $x_i$ and $y_i$ with $\gamma_i$ as above and the third part connects $y_i$ and $y$ with line segment. It is clear that $\gamma^N \subset \Omega$ and $\gamma^N$ connects $x$ and $y$, then from \eqref{a1} and \eqref{a2} and the choice of $\epsilon$, we obtain\\
(i) $\mathcal{H}^1(\gamma^N) \le M|x-y|+2\frac{M+1}{N}$, and\\
(ii) $d(p, \partial \Omega) \ge (\frac{1}{M}-\frac{1}{N}) \min\{|p-x|,|p-y|\}-\frac{1}{MN}\quad \forall p\in \gamma^N$. \\
Then by compactness of $(\overline{\Omega}, d^H)$, and since $\gamma^N$ is connected, there is a compact connected set $E \subset \overline{\Omega}$ such that $d^H(\gamma^N, E) \rightarrow 0$ as $N \rightarrow \infty$. Then by \cite[Theorem 3.18]{Fa}, 
$$\mathcal{H}^1(E) \le \liminf_{N \rightarrow \infty} \mathcal{H}^1(\gamma^N) \le M|x-y|.$$
Then by \cite{Fa}[Lemma 3.12], $E$ is path connected, thus we can choose a curve $\gamma \subset E$ joining $x$ and $y$. For any $p \in \gamma$, we can choose sequence $p_N\in \gamma^N, p_N \rightarrow p$. Since 
$$d(p_N, \partial \Omega) \ge (\frac{1}{M}-\frac{1}{N}) \min\{|p_N-x|,|p_N-y|\}-\frac{1}{2MN},$$ 
we have, after sending $N \rightarrow \infty$,
$$d(p, \partial \Omega) \ge \frac{1}{M} \min\{|p-x|,|p-y|\},$$
which also clearly implies $\gamma \subset int\,\Omega$. Then $\gamma$ satisfies both properties in the definition of $M$-uniform domain, thus $\Omega$ is $M$-uniform.
By Remark \ref{new2} and Proposition \ref{shuyu}, $\Omega$ is a domain. This completes the proof.
\end{proof}

\begin{remark}
The full generality of compactness of $M$-uniform domains is obtained in \cite[Theorem 1.2]{DLW}, where it is shown that any sequence of $M$-uniform domains with fixed volume must have uniformly bounded fractional perimeters, and thus have an $L^1$ limit up to a subsequence, and the limit is also $M$-uniform.
\end{remark}

\section{Existence of equilibrium liquid crystal droplets in Problem A-C}
In this section we will study the existence of minimizers to Problems A-C, which can be extended in $n$-dimensions. 
We begin with the following Lemma, which plays a crucial role in Problems A-C over outer minimal sets.

\begin{lemma}
\label{heng}
For $c>0$, let $\{E_i\}_{i=1}^\infty \in \mathcal{D}_c$ be a sequence of outward-minimizing sets
such that $E_i \rightarrow E$ in $L^1$ as $i\to\infty$. Then $E\in\mathcal{D}_c$ is also an outward-minimizing set. 
Moreover, $P(E_i) \rightarrow P(E)$ and $\mathcal{H}^{n-1}(\partial_* E_i) \rightarrow \mathcal{H}^{n-1}(\partial_* E)$
as $i\to\infty$.
\end{lemma}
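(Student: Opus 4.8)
The plan is to establish the four assertions in turn: membership $E\in\mathcal D_c$, the perimeter convergence $P(E_i)\to P(E)$, the outer minimality of $E$, and finally the essential-boundary convergence, which will come essentially for free. First I would invoke Lemma~\ref{neijin}: since $\{E_i\}\subset\mathcal D_c$ and $E_i\to E$ in $L^1$, after modifying $E$ on a Lebesgue-null set we have $E\in\mathcal D_c$, and moreover $d^H(E_i,E)\to0$. Because every $E_i$ is outer minimal (Definition~\ref{psc}) and contained in the fixed ball $B_{R_0}$, comparing $E_i$ with the competitor $B_{R_0}\supset E_i$ gives the uniform bound $P(E_i)\le P(B_{R_0})$. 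The lower semicontinuity of perimeter under $L^1$ convergence then yields
\[
P(E)\le\liminf_{i\to\infty}P(E_i)\le P(B_{R_0})<\infty,
\]
so $E$ is a set of finite perimeter lying in $\mathcal D_c$.

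The heart of the argument is the reverse inequality $\limsup_i P(E_i)\le P(E)$, for which I would use an outer smooth approximation of $E$. Since $E\in\mathcal D_c$ has finite perimeter, Proposition~\ref{outside} produces bounded smooth sets $G_k\Supset E$ with $G_k\to E$ in $L^1$ and $P(G_k)\to P(E)$. Fix $k$. As $\overline E\approx E$ is compact (Remark~\ref{closure}) and $G_k$ is an open set compactly containing it, there is $\delta>0$ with $E^\delta\subset G_k$; the Hausdorff convergence $d^H(E_i,E)\to0$ from the first step (concretely, Lemma~\ref{neijin}(iii)) then forces $E_i\subset E^\delta\subset G_k$ for all large $i$. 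Outer minimality of $E_i$ applied to the competitor $G_k\supset E_i$ gives $P(E_i)\le P(G_k)$, whence $\limsup_i P(E_i)\le P(G_k)$. Letting $k\to\infty$ yields $\limsup_i P(E_i)\le P(E)$, and combined with the lower bound above we conclude $P(E_i)\to P(E)$.

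To show $E$ is outer minimal, let $F\supset E$ be any set of finite perimeter. Since $E\subset F$, we have $F\cap E_i\to E$ and $F\cup E_i\to F$ in $L^1$. The submodularity of perimeter gives
\[
P(F\cup E_i)\le P(F)+P(E_i)-P(F\cap E_i),
\]
and passing to the limit while using the perimeter convergence $P(E_i)\to P(E)$ together with the lower semicontinuity bound $\liminf_i P(F\cap E_i)\ge P(E)$ gives $\limsup_i P(F\cup E_i)\le P(F)$. As $E_i$ is outer minimal and $F\cup E_i\supset E_i$, we have $P(E_i)\le P(F\cup E_i)$, so
\[
P(E)=\lim_{i\to\infty}P(E_i)\le\limsup_{i\to\infty}P(F\cup E_i)\le P(F),
\]
proving outer minimality of $E$. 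Finally, for the essential boundaries: by De~Giorgi's structure theorem together with Federer's Theorem~\ref{Federer}, every set of finite perimeter $A$ satisfies $\mathcal H^{n-1}(\partial_*A)=\mathcal H^{n-1}(\partial^*A)=P(A)$; hence $\mathcal H^{n-1}(\partial_*E_i)=P(E_i)\to P(E)=\mathcal H^{n-1}(\partial_*E)$, which is the last assertion.

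I expect the main obstacle to be the upper perimeter bound $\limsup_i P(E_i)\le P(E)$ in the second step. Lower semicontinuity is automatic, but ruling out a loss of perimeter in the limit is exactly where the two structural hypotheses must be combined: the $\mathcal D_c$ membership supplies both the Hausdorff convergence (so that the $E_i$ are eventually trapped inside the neighborhood $G_k$) and the perimeter-converging outer approximation $G_k$ of Proposition~\ref{outside}, while outer minimality converts the containment $E_i\subset G_k$ into the perimeter inequality $P(E_i)\le P(G_k)$. The only remaining points requiring care are the bookkeeping of measure-theoretic representatives when passing between $L^1$ and Hausdorff convergence, and checking that Proposition~\ref{outside} is legitimately applied only after the finiteness of $P(E)$ has been secured in the first step.
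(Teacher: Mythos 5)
Your proof is correct and follows essentially the same route as the paper's: Lemma~\ref{neijin} for the $\mathcal{D}_c$ membership and the eventual containment $E_i \subset E^\delta \subset G_k$, Proposition~\ref{outside} combined with the outer minimality of $E_i$ to get $\limsup_i P(E_i) \le P(E)$, submodularity plus lower semicontinuity for the outer minimality of $E$, and Federer's theorem for the statement about essential boundaries. The only difference is one of ordering and is cosmetic: the paper establishes outer minimality of $E$ first, via $P(E_i \cap F) \le P(F) + P(E_i) - P(E_i \cup F) \le P(F)$ and lower semicontinuity along $E_i \cap F \to E$ (so that step needs no perimeter convergence), whereas you prove perimeter convergence first and then feed $\lim_i P(E_i) = P(E)$ into the minimality argument --- both are valid.
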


\begin{proof}
Let $F \supset E$. Then by \cite[Proposition 3.38(d)]{afp} and the outward-minimality of $E_i$ we have 
$$P(E_i \cap F) \le P(F)+P(E_i)-P(E_i \cup F) \le P(F).$$
This implies
$$P(E)=P(E\cap F)\le\liminf_i P(E_i\cap F)\le F(F).$$
Hence $E$ is outward-minimizing. By Lemma \ref{neijin}  and Remark \ref{dens}, $E \in \mathcal{D}_c \cap \mathcal{D}^c$.  
It follows from Proposition \ref{outside} that for any $\epsilon>0$, there exists a smooth open set
$O_\epsilon\Supset E$ such that  
$$P(O_\epsilon) \le P(E)+\epsilon.$$
Applying Lemma \ref{neijin} (iii), we have that there exists a sufficiently large $i_0\ge 1$
such that 
$$E_i \subset O_\epsilon, \forall i\ge i_0.$$
This, combined with the outward minimality of $E_i$, implies
$$P(E_i)\le P(O_\epsilon)\le P(E)+\epsilon, \ \forall i\ge i_0.$$
Thus
$$\limsup_i P(E_i) \le P(E).$$
On the other hand, by lower semicontinuity we have 
$$P(E) \le \liminf_i P(E_i).$$
Therefore $P(E_i) \rightarrow P(E)$ as $i\to \infty$.

Since $E_i, E \in \mathcal{D}_c \cap \mathcal{D}^c\ $, 
the last statement follows from Theorem \ref{Federer}.

\end{proof}

Now we are ready to state the main theorem of this section.

\begin{theorem}
\label{bigexistence} The following statements hold:
\begin{itemize}
\item [i)] For $M\ge 1$, the infimum of Problem C in the class of $M$-uniform domains of finite perimeter is attained.
\item [ii)] For $M>1$, the infimum of Problems A, B, C can be attained in the class of
$M$-uniform outer minimal domains.
\end{itemize}
\end{theorem}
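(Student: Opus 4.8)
The plan is to run the direct method on a minimizing sequence $(\Omega_i,u_i)$, isolating three ingredients: compactness of the admissible class, lower semicontinuity of the Dirichlet energy $\int|\nabla u|^2$ as \emph{both} $\Omega$ and $u$ vary, and control of the boundary term, where parts (i) and (ii) part ways. Since $\mu>0$, a minimizing sequence has uniformly bounded perimeter, so BV-compactness in $B_{R_0}$ gives $\Omega_i\to\Omega$ in $L^1$ (after a subsequence), with $|\Omega|=V_0$. For part (i), Theorem \ref{maincompact} ensures that $\Omega$ is again an $M$-uniform domain; for part (ii) I would additionally invoke Lemma \ref{heng} to get that $\Omega$ is outer-minimizing, that $\Omega\in\mathcal{D}_c\cap\mathcal{D}^c$ (Remark \ref{bianjiea}), and, crucially, that $P(\Omega_i)\to P(\Omega)$. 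To obtain a limiting field, extend each $u_i$ by the Jones extension \cite{Jo}, whose norm on an $M$-uniform domain depends only on $M$ and $n$; this gives $\hat u_i$ bounded in $H^1(B_{R_0})$, and after a subsequence $\hat u_i\rightharpoonup\hat u$ in $H^1$ with $\hat u_i\to\hat u$ in $L^2$ and a.e. Setting $u=\hat u|_\Omega$, the a.e. limit together with $\chi_{\Omega_i}\to\chi_\Omega$ a.e. forces $|u|=1$ a.e., so $u\in H^1(\Omega,\mathbb{S}^2)$.

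For the Dirichlet term the workhorse is the weak--strong pairing $\chi_{\Omega_i}\nabla\hat u_i\rightharpoonup\chi_\Omega\nabla\hat u$ in $L^2(B_{R_0})$ (test against $\varphi\in L^2$ and move $\chi_{\Omega_i}$ onto $\varphi$, which converges strongly), whence $\int_\Omega|\nabla u|^2\le\liminf_i\int_{\Omega_i}|\nabla u_i|^2$ by weak lower semicontinuity of the $L^2$-norm; the same pairing controls $\int_{\Omega_i}u_i\cdot\nabla\psi$ and $\int_{\Omega_i}\psi\,{\rm div}\,u_i$. The surface term $\mu P(\Omega)$ of Problem C is lower semicontinuous by standard perimeter lower semicontinuity. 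Applying the Gauss--Green formula of Theorem \ref{traceextensiondomain} componentwise (with $\phi=\psi e_k$ and summing in $k$) yields, for every $\psi\in C^1_0$,
\begin{equation*}
\int_{\partial^*\Omega_i}\psi\,(u_i\cdot\nu_{\Omega_i})\,d\mathcal{H}^{n-1}=\int_{\Omega_i}u_i\cdot\nabla\psi+\int_{\Omega_i}\psi\,{\rm div}\,u_i,
\end{equation*}
whose right-hand side converges to the analogous quantity for $(u,\Omega)$; hence, using \emph{only} bulk convergence, $(u_i\cdot\nu_{\Omega_i})\mathcal{H}^{n-1}\lfloor_{\partial^*\Omega_i}\stackrel{*}{\rightharpoonup}(u\cdot\nu_{\Omega})\mathcal{H}^{n-1}\lfloor_{\partial^*\Omega}$.

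For part (i) the homeotropic constraint $u_i\cdot\nu_{\Omega_i}\equiv1$ identifies the left-hand measure with $\mathcal{H}^{n-1}\lfloor_{\partial^*\Omega_i}$, so the displayed convergence reads $\mathcal{H}^{n-1}\lfloor_{\partial^*\Omega_i}\stackrel{*}{\rightharpoonup}(u\cdot\nu_{\Omega})\mathcal{H}^{n-1}\lfloor_{\partial^*\Omega}$. On the other hand, lower semicontinuity of total variation under the $L^1$-convergence $\chi_{\Omega_i}\to\chi_\Omega$ gives $\mathcal{H}^{n-1}\lfloor_{\partial^*\Omega}\le(u\cdot\nu_{\Omega})\mathcal{H}^{n-1}\lfloor_{\partial^*\Omega}$ as measures, i.e. $u\cdot\nu_{\Omega}\ge1$ $\mathcal{H}^{n-1}$-a.e. on $\partial^*\Omega$; combined with $|u\cdot\nu_{\Omega}|\le|u|\,|\nu_{\Omega}|=1$ this forces $u\cdot\nu_{\Omega}=1$ a.e. Hence $(u,\Omega)$ is admissible for Problem C and, by the two semicontinuity bounds above, minimizing. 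I expect the passage of the constraint \emph{without} perimeter convergence to be the main obstacle in part (i).

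For part (ii), $P(\Omega_i)\to P(\Omega)$ from Lemma \ref{heng} upgrades $\mu_{\Omega_i}\stackrel{*}{\rightharpoonup}\mu_{\Omega}$ to convergence of total masses, hence $\mathcal{H}^{n-1}\lfloor_{\partial^*\Omega_i}\stackrel{*}{\rightharpoonup}\mathcal{H}^{n-1}\lfloor_{\partial^*\Omega}$. Together with the weak-$*$ convergence of $(u_i\cdot\nu_{\Omega_i})\mathcal{H}^{n-1}\lfloor_{\partial^*\Omega_i}$, the $\mathbb{R}^2$-valued measures $(u_i\cdot\nu_{\Omega_i},1)\,\mathcal{H}^{n-1}\lfloor_{\partial^*\Omega_i}$ converge weakly-$*$, and applying Reshetnyak's lower semicontinuity theorem \cite[Theorem 20.11]{Maggi} to the positively $1$-homogeneous convex perspective $F(p,q)=qf(p/q)$ of $f$ (so that $F(t,1)=f(t)$) delivers $\int_{\partial^*\Omega}f(u\cdot\nu_{\Omega})\le\liminf_i\int_{\partial^*\Omega_i}f(u_i\cdot\nu_{\Omega_i})$, the surface lower semicontinuity needed for Problem A. For Problems B and C the constraint $u_i\cdot\nu_{\Omega_i}\equiv c$ passes at once, since $(u_i\cdot\nu_{\Omega_i})\mathcal{H}^{n-1}\lfloor_{\partial^*\Omega_i}$ and $c\,\mathcal{H}^{n-1}\lfloor_{\partial^*\Omega_i}$ share the same weak-$*$ limit, forcing $u\cdot\nu_{\Omega}=c$ a.e. Here the main obstacle is precisely the convex surface integrand of Problem A, which the perspective-function/Reshetnyak device, made available by outer minimality, resolves.
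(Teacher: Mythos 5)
Your proposal is correct in substance and rests on the same pillars as the paper's proof---the Jones extension with constant depending only on $(M,n)$, Theorem \ref{maincompact} for compactness, the Gauss--Green formula of Theorem \ref{traceextensiondomain} paired weak--strong against $\chi_{\Omega_i}$, and Lemma \ref{heng} for the outer-minimal class---but it executes the two delicate steps differently. For part (i) the paper does not localize: it applies the divergence theorem globally, obtaining
$P(\Omega_i)=\int_{\Omega_i}\mathrm{div}\,u_i\to\int_{\Omega}\mathrm{div}\,u=\int_{\partial^*\Omega}u^*\cdot\nu_{\Omega}\,d\mathcal{H}^{n-1}\le P(\Omega)\le\liminf_i P(\Omega_i)$,
which in one stroke forces $u^*=\nu_{\Omega}$ $\mathcal{H}^{n-1}$-a.e.\ and, as a by-product, yields $P(\Omega_i)\to P(\Omega)$. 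Your localized version with test functions $\psi$, combined with the standard fact that $|D\chi_{\Omega}|$ is dominated by any weak-$*$ limit of $|D\chi_{\Omega_i}|$, reaches the same conclusion with a bit more measure theory, and has the merit that the weak-$*$ convergence $(u_i\cdot\nu_{\Omega_i})\mathcal{H}^{n-1}\lfloor_{\partial^*\Omega_i}\stackrel{*}{\rightharpoonup}(u\cdot\nu_{\Omega})\mathcal{H}^{n-1}\lfloor_{\partial^*\Omega}$ is exactly the input recycled in part (ii); the paper instead re-derives this convergence on balls inside the part (ii) proof, via conditions (a)--(d). For part (ii), Problem A, the paper does not invoke Reshetnyak as a black box (it only says the argument is ``inspired by'' it): it writes $f=\sup_i(a_it+b_i)$, extracts a weak-$*$ limit $\mu$ of $f(u_h\cdot\nu_h)\mathcal{H}^{n-1}\lfloor_{\partial^*\Omega_h}$, and proves $D_{\tau}\mu\ge f(u\cdot\nu_{\Omega})$ $\tau$-a.e.\ by Besicovitch differentiation, using the same two inputs you use (the localized Gauss--Green convergence and $\tau_h\to\tau$ from Lemma \ref{heng}). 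Your shortcut---Reshetnyak's theorem applied to the $\mathbb{R}^2$-valued measures $(u_h\cdot\nu_h,1)\,\mathcal{H}^{n-1}\lfloor_{\partial^*\Omega_h}$ with the perspective integrand $F(p,q)=qf(p/q)$---is legitimate and cleaner, provided you extend $F$ to a convex, lower semicontinuous, positively $1$-homogeneous function on all of $\mathbb{R}^2$ (e.g.\ $+\infty$ outside the cone $\{|p|\le q\}$), which is harmless here since every density satisfies $|u_h\cdot\nu_h|\le 1$. Both arguments hinge identically on outer minimality supplying the exact convergence $\mathcal{H}^{n-1}\lfloor_{\partial^*\Omega_h}\stackrel{*}{\rightharpoonup}\mathcal{H}^{n-1}\lfloor_{\partial^*\Omega}$, without which the limit boundary measure could carry an extra nonnegative part and the lower bound would be tested against the wrong measure.

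One small correction: your uniform perimeter bound ``since $\mu>0$'' covers part (i) and Problems B, C, but not Problem A in part (ii), where $f$ is only assumed nonnegative and may vanish, so bounded energy does not bound $P(\Omega_h)$. There the bound is instead immediate from outer minimality: $P(\Omega_h)\le P(B_{R_0})$ because $\Omega_h\subset B_{R_0}$. This is precisely what the paper uses, and it is available within your framework, so the fix is one line.
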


%\begin{remark}
%\label{hunxiao}
%Except the case (i) and (iv), the surface integrals in Problems (A)-(C) can be also understood on $\partial D$ instead of $\partial^* D$. This is because of Federer's Theorem \ref{Federer}. Hence in the proofs below, sometimes we don't make a distinction between reduced boundary and topological boundary.
%\end{remark}

%\begin{remark}
%In the proof below sometimes we don't make a distinction between the notations $u$ and $u^-$ as traces on the reduced boundary, especially when dealing with extension domains. 
%\end{remark}

\begin{proof}
We first prove i). For a minimizing sequence $(\Omega_i,u_i)$,
where $\Omega_i$ are $M$-uniform domains with finite perimeter and $u_i\in H^1(\Omega_i,\mathbb{S}^2)$.
Let $\hat{u}_i\in H^1(B_{R_0},\mathbb R^3)$ 
be an extension of $u_i$ such that 
$$\|\hat{u}_i\|_{H^1(B_{R_0})} \le C(n,M) \|u_i\|_{H^1(\Omega_i)}.$$
Hence there is a $\hat{u} \in H^1(B_{R_0},\mathbb{R}^3)$ such that
$$\hat{u}_i\rightharpoonup \hat{u} \ {\rm{in}}\ H^1(B_{R_0}).$$
By Theorem \ref{maincompact}, 
there is an $M$-uniform domain $\Omega\subset B_{R_0}$ 
such that $\Omega_i \rightarrow \Omega$ in $L^1$. 
Since $\nabla \hat{u}_i \rightharpoonup \nabla \hat{u}$ in $L^2(B_{R_0})$ and $\chi_{\Omega_i} \rightarrow \chi_{\Omega}$ in $L^1(B_{R_0})$, by the lower semicontinuity we have that
\begin{eqnarray}
\label{kalehaojiu}
\int_{\Omega} |\nabla \hat{u}|^2 \le \liminf_{i \rightarrow \infty} \int_{\Omega_i} |\nabla \hat{u}_i|^2
=\liminf_{i \rightarrow \infty} \int_{\Omega_i} |\nabla {u}_i|^2.
\end{eqnarray}
%Note that we cannot use the approach of interior approximation of $\Omega$ as in step 1 to prove \eqref{kalehaojiu}. Instead, first we notice that $w_i:=\nabla u_i \chi_{\Omega_i}$ is a bounded sequence in $L^2(B_{R_0})$, hence there is a function $w \in L^2(B_{R_0})$ such that $w_i$ converges to $w$ weakly in $L^2$, hence $$\int_{B_{R_0}} |w|^2 \le \liminf_{i \rightarrow \infty} \int_{B_{R_0}} |\nabla w_i|^2 = \liminf_{i \rightarrow \infty} \int_{\Omega_i} |\nabla u_i|^2.$$Based on the above inequality, to prove \eqref{kalehaojiu}, it suffices to show $w=\nabla u\chi_{\Omega}$. Indeed, $w_i$ converge to $w$ weakly in $L^2$ implies $w_i$ converge to $w$ weakly in $L^1$. On the other hand, for any $g \in L^{\infty}(B_{R_0})$, we have the estimate
%\begin{eqnarray}
%|\int_{B_{R_0}} g w_i -\int_{B_{R_0}} g \nabla u \chi_{\Omega}|&=& |\int_{\Omega_i} g \nabla u_i  -\int_{\Omega} g \nabla u|\\ &\le & |\int_{\Omega_i \setminus \Omega} g \nabla u_i|+|\int_{\Omega} g \nabla u_i -g \nabla u|\\
%\label{aheng}
%&\le& ||g||_{\infty} ||u_i||_{H^1(B_{R_0})} |\Omega_i \setminus \Omega|^{1/2} + |\int_{\Omega} g \nabla u_i -g \nabla u|.
%\end{eqnarray}
%Since $\Omega_i \rightarrow \Omega$ and $u_i$ are bounded sequence in $H^1(B_{R_0})$ weakly converge to $u$, \eqref{aheng} implies $w_i$ converges to $u\chi_{\Omega}$ weakly in $L^1$. Therefore by uniqueness of weak $L^1$ limit, $w=u\chi_{\Omega}$. Thus we have finished proving \eqref{kalehaojiu}. \\ J. Differential Equations 94 (1991), no. 1, 83?94.
Denote $u=\hat{u}\big|_{\Omega}$. Then it is not hard to see $|u|=1$ for a.e. $x\in\Omega$
so that $u\in H^1(\Omega,\mathbb S^2)$.
In order to show $(\Omega, u)$ is a minimizer of Problem (C) among $M$-uniform domains of finite perimeter, we have to verify that $u^*=\nu_\Omega$  for $\mathcal{H}^{n-1}$-a.e. on $\partial^* \Omega$. In fact, it follows from $\chi_{\Omega_i}\rightarrow\chi_\Omega$ in $L^2(B_{R_0})$
and $div(\hat{u}_i)\rightharpoonup div(\hat{u})$ in $L^2(B_{R_0})$ and Theorem \ref{traceextensiondomain} that
\begin{eqnarray*}
P(\Omega_i)=\int_{\Omega_i} div ({u}_i) =\int_{B_{R_0}} \chi_{\Omega_i} div (\hat{u}_i) 
&\rightarrow& \int_{B_{R_0}} \chi_\Omega div (\hat{u})=\int_\Omega div(u)\\
&=&\int_{\partial^*\Omega}u^*\cdot \nu_{\Omega}\, d\mathcal{H}^{n-1}\le P(\Omega).
\end{eqnarray*}
This, combined with the lower semicontinuity property of perimeter, implies that $u^*=\nu_\Omega$  for $\mathcal{H}^{n-1}$-a.e. on $\partial^* \Omega$.
Hence the proof of i) is complete.

\medskip
Next, we prove ii). For Problem A in part ii),
let $(\Omega_h,u_h)$ be a minimizing sequence among $M$-uniform, outer minimal domains and $H^1$-unit vector fields on $\Omega_h$. Since $\Omega_h$ are outward-minimizing sets in $B_{R_0}$, 
$P(\Omega_h)$ are uniformly bounded.
By Lemma \ref{heng} and Theorem \ref{maincompact}, we may assume that there exists an $M$-uniform, outer minimal domain $\Omega$ such that up to a subsequence,
 $\Omega_h \rightarrow \Omega$ in $L^1$ and $P(\Omega_h) \rightarrow P(\Omega)$.
As in the proof of i) above, we may extend $u_h$ in $B_{R_0}$, still denoted as $u_h$,
so that $u_h \rightharpoonup u$ in $H^1(B_{R_0}, \mathbb R^3)$ for some $u\in H^1(B_{R_0},\mathbb R^3)$. Thus
we have 
$$\int_{\Omega} |\nabla u|^2 \le \liminf_h \int_{\Omega_h} |\nabla u_h|^2,$$
and $u(x)\in \mathbb S^2$ for a.e. $x\in\Omega$.
 
Since $f$ is  convex, we can write 
$$f(x)=\sup_i(a_ix+b_i).$$
In the following, we do not distinguish $u$ with $u^*$ on $\partial^* \Omega$, and we do not distinguish $\partial^* \Omega_h, \partial^* \Omega$ with $\partial \Omega_h, \partial \Omega$ due to Remark \ref{bianjiea}. Define $$\tau_h(A):=\mathcal{H}^{n-1}(\partial^* \Omega_h \cap A),\ 
\tau(A):=\mathcal{H}^{n-1}(\partial^* \Omega \cap A), \  {\rm{and}}\
\mu_h(A):=\int_{A}f(u_h \cdot \nu_h)d\tau_h,$$
for any measurable $A\subset\mathbb R^n$, where $\nu_h$ is the measure theoretical outer unit normal of $\Omega_h$.  
Then Lemma \ref{heng} implies that 
\begin{eqnarray}
\label{0}
\tau_h(A) \rightarrow \tau(A) \quad \mbox{as $h \rightarrow \infty$}.
\end{eqnarray}
Since $f$ is bounded and nonnegative, $\mu_h$ are nonnegative Radon measures so that 
we may assume there is a nonnegative Radon measure $\mu$ such that after passing to a subsequence, 
$\mu_h \rightharpoonup \mu$ as $h\to\infty$ as weak convergence of Radon measures. 
Decompose $\mu$ as $\mu=(D_{\tau}\mu)\tau+\mu^s, \mu^s \perp \tau$, and $\mu^s\ge 0$. Then 
\begin{eqnarray}
\liminf_{h \rightarrow \infty} \mu_h(A) \ge \mu(A) \ge \int_A D_{\tau} \mu d\tau.
\end{eqnarray} 
It follows from Theorem \ref{Federer} that $x \in \partial^* \Omega$ holds for $\tau$-a.e. $x \in B_{R_0}$. 
Now any such $x\in\partial^*\Omega$, we claim that there exists $r_j\to 0$ such that for  $B_j=B_{r_j}(x)$, it holds that
\begin{itemize}
\item[(a)] $\mathcal{H}^{n-1}(\partial B_j \cap \partial \Omega)=0$ and $\mathcal{H}^{n-1}(\partial B_j \cap \partial \Omega_h)=0, \, \forall h\ge 1$.
\item[(b)] $\displaystyle\int_{\partial B_j \cap \Omega_h} u_h \cdot \nu_{B_j}\,d\mathcal{H}^{n-1} \rightarrow \int_{\partial B_j \cap \Omega} u \cdot \nu_{B_j}\,d\mathcal{H}^{n-1}$\ as\ $h \rightarrow \infty$.
\item[(c)] $\mu(\partial B_j)=0$.
\item[(d)] $D_{\tau} \mu(x)=\displaystyle\lim_j\frac{\mu(B_j)}{\tau(B_j)}$ and $\displaystyle\lim_{j\rightarrow \infty} \frac{\int_{B_j}u \cdot \nu_{B_j} d\tau}{\tau(B_j)}=u(x)\cdot \nu(x)$.
\end{itemize}
Indeed, (a) and (c) are true because $\tau, \tau_h,$ and $\mu$ are nonnegative Radon measures. 
(d) follows from the Lebesgue differentiation Theorem. To see (b), let $\tilde{u}_h=u_h \chi_{\Omega_h}$ and $\tilde{u}=u\chi_{\Omega}$. Since $\tilde{u}_h \rightarrow \tilde{u}$ in $L^1$, we have 
\begin{eqnarray*}
\int_{B_1(x)} |\tilde{u_h}-\tilde{u}| = \int_0^1 \int_{\partial B_r(x)} |\tilde{u}_h-\tilde{u}| d\mathcal{H}^{n-1} dr \rightarrow 0 \quad \mbox{as $h \rightarrow \infty$}.
\end{eqnarray*}Therefore by Fatou's Lemma, 
$$\int_0^1 \liminf_{h \rightarrow \infty}\int_{\partial B_r(x)}|\tilde{u}_h-\tilde{u}|d\mathcal{H}^{n-1}\,dr=0,$$ 
hence for almost every $r \in (0,1)$ and for a subsequence of $h \rightarrow \infty$, 
\begin{eqnarray*}
&&\big|\int_{\partial B_r(x) \cap \Omega_h} u_h \cdot \nu_{B_r(x)}\,d\mathcal{H}^{n-1}-\int_{\partial B_r(x) \cap \Omega} u \cdot \nu_{B_r(x)}\,d\mathcal{H}^{n-1}\big|\\
&& \le  \int_{\partial B_r(x)} |\tilde{u}_h-\tilde{u}| d\mathcal{H}^{n-1} \rightarrow 0.
\end{eqnarray*} 
This finishes the proof of (b).
Now we return to the proof of v). By (c), 
\begin{eqnarray*}
\label{3}
\mu(B_j) = \lim_{h \rightarrow \infty} \mu_h(B_j) = \lim_{h \rightarrow \infty} \int_{\partial \Omega_h \cap B_j} f(u_h\cdot \nu_h)\,d\mathcal{H}^{n-1}.
\end{eqnarray*}
Also as $h \rightarrow \infty$, up to a subsequence we have
\begin{eqnarray*}
&&\int_{\partial \Omega_h \cap B_j} u_h \cdot \nu_h\,d\mathcal{H}^{n-1}\\
&&= \int_{\partial(\Omega_h \cap B_j)} u_h \cdot \nu_{\Omega_h \cap B_j}\,d\mathcal{H}^{n-1} - \int_{\partial B_j \cap \Omega_h} u_h \cdot \nu_{B_j} \,d\mathcal{H}^{n-1}, \\
%\, \mbox{by (a) and \cite{Maggi}[Theorem 16.3]}\\
&&= \int_{ \Omega_h \cap B_j} divu_h-\int_{\partial  B_j \cap \Omega_h} u_h \cdot \nu_{B_j} \,d\mathcal{H}^{n-1},\\ 
%\, \mbox{by divergence theorem} \\
&&\rightarrow\int_{\Omega \cap B_j} divu - \int_{\partial B_j \cap \Omega} u \cdot \nu_{B_j} \,d\mathcal{H}^{n-1}, \\
%\, \mbox{by (b) and similar argument as in \eqref{dd1} or \eqref{dd2}}\\
&&= \int_{\partial(\Omega \cap B_j)} u \cdot \nu_{\Omega \cap B_j}\,d\mathcal{H}^{n-1} - \int_{\partial B_j \cap \Omega} u \cdot\nu_{B_j} \,d\mathcal{H}^{n-1}\\
\label{5.11}
&&=\int_{\partial \Omega \cap B_j} u \cdot \nu_\Omega\,d\mathcal{H}^{n-1}.
% \, \mbox{by (a) and \cite{Maggi}[Theorem 16.3]}
\end{eqnarray*}
%where $\nu$ is the measure theoretical outer unit normal on $\partial ^* \Omega$. \\
Therefore, for $\tau$-a.e. $x\in B_{R_0}$, it follows
\begin{eqnarray}
D_{\tau} \mu(x)&=&\lim_j\frac{\mu(B_j)}{\tau(B_j)}\\
&=&\lim_j\lim_h \frac{ \int_{\partial \Omega_h \cap B_j} f(u_h\cdot \nu_h)\,d\mathcal{H}^{n-1}}{\mathcal{H}^{n-1}(\partial \Omega \cap B_j)}\nonumber\\
& \ge &\lim_j\lim_h \frac{ \int_{\partial \Omega_h \cap B_j}(a_i u_h\cdot \nu_h+b_i)\,d\mathcal{H}^{n-1}}{\mathcal{H}^{n-1}(\partial \Omega \cap B_j)}\nonumber\\
&=&\lim_j\frac{ \int_{\partial \Omega \cap B_j}(a_i u\cdot \nu_\Omega+b_i)\,d\mathcal{H}^{n-1}}{\mathcal{H}^{n-1}(\partial \Omega \cap B_j)},\quad \mbox{also by}\, \eqref{0}\nonumber\\
&=& a_i u(x) \cdot \nu_\Omega(x) +b_i. \nonumber
\end{eqnarray}
Hence $D_{\tau}\mu \ge f(u\cdot \nu_\Omega)$ for $\tau$- a.e. $x\in B_{R_0}$, and 
\begin{eqnarray}
\liminf_h \int_{\partial \Omega_h} f(u_h \cdot \nu_h)\,d\mathcal{H}^{n-1}&=&\liminf_h \mu_h(B_{R_0}) \ge \int_{B_R} D_{\tau} \mu d\tau\nonumber\\
&\ge& \int_{B_{R_0}} f(u\cdot \nu) d\tau =\int_{\partial \Omega} f(u\cdot\nu)\,d\mathcal{H}^{n-1}.
\end{eqnarray}
Therefore, $(\Omega,u)$ is a minimizer.\\

To complete the proof of statements in ii), it remains to show if $(\Omega_i, u_i)$ are a minimizing sequence in Problem (B) and converges weakly to $(\Omega,u)$, 
then $u \cdot \nu = c$ for $\mathcal{H}^{n-1}$-a.e. on $\partial^* \Omega$. This can be seen from
$$\liminf_{i \rightarrow \infty} \int_{\partial^* \Omega_i} f(u_i\cdot \nu_i)d\mathcal{H}^{n-1}\ge \int_{\partial^* \Omega}f(u \cdot \nu)d\mathcal{H}^{n-1}.$$  
In fact, by choosing $f(t)=\mu (t-c)^2$ we have that
\begin{eqnarray}
\label{614}
\int_{\partial^* \Omega} (u \cdot \nu-c)^2\,d\mathcal{H}^{n-1}\le \liminf_{i \rightarrow \infty} \int_{\partial^* \Omega_i} (u_i \cdot \nu_i-c)^2\,d\mathcal{H}^{n-1}=0.
\end{eqnarray} 
Hence $u \cdot \nu\equiv c$ for $\mathcal{H}^{n-1}$-a.e. on $\partial^* \Omega$. This completes the proof.
\end{proof}

\section{On the uniqueness of Problem C}
In this section, we will show the uniqueness of Problem C in the class of $C^{1,1}$-star-shaped, mean convex domains in $\mathbb R^3$.
We will assume the domains has volume $V_0=|B_1|$, where $B_1\subset\mathbb R^3$ is the unit ball centered at $0$. We begin with

%\begin{lem}
%\label{1}
%Fix a bounded smooth domain $\Omega \subset \mathbb{R}^3$, then 
%\begin{eqnarray*}
%\label{1.1}
%\inf\{\int_{\Omega}|\nabla u|^2: u \in H^1(\Omega, S^2), u=\nu \, \mbox{on $ \partial \Omega$}\} \\
%=\inf\{\int_{\Omega}|\nabla u|^2: u \in C^{\infty}(\Omega, S^2) \, \mbox{except at a finite number of points}, u=\nu \, \mbox{on $\partial \Omega$}\}
%\end{eqnarray*}

%\end{lem}

%\begin{proof}
%Let $u_k$ be a minimizing sequence on the L.H.S. of \eqref{1.1}, then by compactness up to a subsequence $u_k \rightarrow u$ in $H^1(\Omega, S^2)$. By $L^2$ strong convergence and trace theorem, $u=\nu$ on $\partial \Omega$. Hence $u$ is a minimizing harmonic map with boundary as unit outer normal. By 
%\cite{LW}[Theorem 2.4.1], which was originally proved by Schoen and Ulunbeck, that $u$ is smooth on a $\delta$-neighborhood of $\partial \Omega$, and moreover by \cite{AL} for example $u$ has only finite number of singularities inside the domain.
%\end{proof}

\begin{lemma} For any bounded $C^{1,1}$-domain $\Omega\subset\mathbb R^3$, 
\label{2}
\begin{equation}
\label{2.1}
\inf\big\{\int_{\Omega}|\nabla u|^2\ \big|\  u \in H^1(\Omega, \mathbb S^2), u=\nu_\Omega \, \mbox{on $ \partial \Omega$}\big\} \ge \int_{\partial \Omega} H_{\partial\Omega}\,d\mathcal{H}^2,
\end{equation} where $H_{\partial\Omega}$ is the mean curvature of $\partial\Omega$.
\end{lemma}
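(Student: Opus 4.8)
The plan is to first turn the right-hand side of \eqref{2.1} into a boundary divergence, then to bound the bulk energy below by that divergence through a symmetric comparison tied to the normal foliation of $\partial\Omega$, using the radial map on a ball as the equality model. Since the statement concerns an infimum, I would first take $u$ to be an energy minimizer for the Dirichlet datum $u=\nu_\Omega$; this is a harmonic map into $\mathbb S^2$ and, by the Schoen--Uhlenbeck theory, is smooth up to the boundary away from a finite interior singular set. I would also approximate the $C^{1,1}$ domain by smooth domains $\Omega_k$ (inner parallel sets of the distance function) along which $\int_{\partial\Omega_k}H_{\partial\Omega_k}\to\int_{\partial\Omega}H_{\partial\Omega}$. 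The elementary but crucial observation is that, because $u=\nu_\Omega$ on $\partial\Omega$ while $|u|\equiv1$ forces $\partial_\nu u\cdot u=0$, one has $\operatorname{div}u=\operatorname{div}_{\partial\Omega}u=\operatorname{div}_{\partial\Omega}\nu_\Omega=H_{\partial\Omega}$ on $\partial\Omega$; hence $\int_{\partial\Omega}H_{\partial\Omega}\,d\mathcal H^2=\int_{\partial\Omega}\operatorname{div}(u)\,d\mathcal H^2$, and the task reduces to bounding $\int_\Omega|\nabla u|^2$ below by this boundary integral.

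The mechanism I would use is the signed distance $d$ to $\partial\Omega$, whose gradient $N=\nabla d$ satisfies the eikonal identities $|\nabla d|\equiv1$, $\nabla^2 d\,\nabla d\equiv0$, and $\Delta d=H_{\partial\Omega}$ on $\partial\Omega$, and which coincides with $u$ for the radial map $x/|x|$ on a ball, where both sides of \eqref{2.1} equal $8\pi R$. Concretely, I would test the harmonic-map stress--energy tensor $T_{jk}=u_{x_j}\cdot u_{x_k}-\tfrac12\delta_{jk}|\nabla u|^2$, which is divergence free for the minimizer, against $N$ on a boundary collar $\{-\delta<d<0\}$ with a cutoff, and combine the resulting boundary term---computed from $u=\nu_\Omega$ together with the eikonal identities---with the interior $\nabla^2 d$ contributions, aiming for everything to collapse, after using $|u|=1$, to $\int_{\partial\Omega}H_{\partial\Omega}$. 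Testing against $N$ rather than the position field is what keeps the argument valid for general (non star-shaped) $C^{1,1}$ domains, while the radial model fixes the sharp constant.

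I expect the decisive difficulty to be extracting exactly the \emph{linear} total mean curvature with the sharp constant. The antisymmetric (Jacobian) structure $|\nabla u|^2\ge 2\,|u^*\omega|$ only sees the pulled-back area form, hence the Gaussian curvature and the topological degree, and yields the strictly weaker estimate controlled by $\operatorname{dist}(\operatorname{sing}(u),\partial\Omega)$ rather than by $\int_{\partial\Omega}H$; conversely, a crude $H^1$-comparison of $u$ with $N$ produces $\int_{\partial\Omega}(\kappa_1^2+\kappa_2^2)$ instead of $\int_{\partial\Omega}(\kappa_1+\kappa_2)$. Arranging the error terms of the stress--energy identity so that the quadratic curvature contributions cancel and only the linear mean curvature survives is therefore the crux. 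The remaining, more routine, obstacles are the mere $C^{1,1}$ regularity of $\partial\Omega$ (so $\nabla^2 d\in L^\infty$ only), which I would absorb via the smooth approximation above and a limit in $\int H$, and the interior point singularities of the minimizer, which I would excise by small balls and show contribute with the favorable sign, exactly as in the Jacobian computation.
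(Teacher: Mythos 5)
Your setup coincides with the paper's: take $u$ to be the minimizing harmonic map, invoke Schoen--Uhlenbeck and Almgren--Lieb to get smoothness away from finitely many interior points, excise the singularities by small balls, and observe that on $\partial\Omega$ the constraints $u=\nu_\Omega$ and $|u|\equiv 1$ reduce the relevant boundary integrand to $\operatorname{div}_{\partial\Omega}\nu_\Omega=H_{\partial\Omega}$. But the decisive step --- bounding the bulk energy from below by a boundary flux that is \emph{linear} in the curvature --- is exactly where your proposal stops being a proof. Pairing the stress--energy tensor $T_{jk}=u_{x_j}\cdot u_{x_k}-\tfrac12\delta_{jk}|\nabla u|^2$ with $N=\nabla d$ contracts $\nabla u$ twice with $\nu_\Omega$ on the boundary and yields $\tfrac12\bigl(|\nabla_T u|^2-|\partial_\nu u|^2\bigr)=\tfrac12\bigl(\kappa_1^2+\kappa_2^2-|\partial_\nu u|^2\bigr)$, i.e.\ precisely the quadratic curvature quantity you yourself identify as the wrong one, together with collar terms $\int T_{jk}\,\partial^2_{jk}d$ that have no sign and no smallness (recall $\nabla^2 d$ is only $L^\infty$, and the cutoff reintroduces interior $|\nabla u|^2$). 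You name the cancellation of these quadratic terms as ``the crux'' but supply no mechanism for it; ``aiming for everything to collapse'' is not an argument, so the proposal is incomplete at its central point, and in fact no choice of test field in this quadratic pairing can produce a flux linear in the second fundamental form.

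The missing idea, which is the whole content of the paper's proof, is a null-Lagrangian inequality rather than a Pohozaev-type identity: for $\mathbb S^2$-valued maps one has the pointwise bound $|\nabla u|^2\ge (\operatorname{div}u)^2-\operatorname{tr}\bigl((\nabla u)^2\bigr)$ (\cite[Proposition 2.2.1]{LW}), and the right-hand side, although quadratic in $\nabla u$, is the divergence of a field that is \emph{linear} in $\nabla u$:
\[
(\operatorname{div}u)^2-\operatorname{tr}\bigl((\nabla u)^2\bigr)=\operatorname{div}\bigl((\operatorname{div}u)\,u-(\nabla u)u\bigr).
\]
Integrating over $\Omega\setminus\bigcup_i B_r(a_i)$, the outer flux is $\operatorname{div}u-\bigl((\nabla u)\nu_\Omega\bigr)\cdot\nu_\Omega=\operatorname{div}_{\partial\Omega}\nu_\Omega=H_{\partial\Omega}$ --- this is exactly how the linear total mean curvature appears --- while the Almgren--Lieb asymptotics $u(x)\sim R\bigl(\frac{x-a_i}{|x-a_i|}\bigr)$ make the fluxes through $\partial B_r(a_i)$ of order $O(r)$ (they vanish in the limit; they are not merely ``of favorable sign''). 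Without this identity, or an equivalent substitute, your reduction of \eqref{2.1} to $\int_\Omega|\nabla u|^2\ge\int_{\partial\Omega}\operatorname{div}u\,d\mathcal H^2$ remains unproven.
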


\begin{proof} Let $u\in H^1(\Omega, \mathbb S^2)$, with $u=\nu_\Omega$ on $\partial\Omega$, be such that 
$$\int_\Omega |\nabla u|^2=\inf\big\{\int_{\Omega}|\nabla u|^2\ \big|\  u \in H^1(\Omega, \mathbb S^2), u=\nu_\Omega \, \mbox{on $ \partial \Omega$}\big\}.$$
Then by \cite{SU1, SU2}, $u\in C^\infty(\Omega\setminus\{a_i\}_{i=1}^N,\mathbb S^2)$ for a finite set $\displaystyle\cup_{i=1}^N \{a_i\}\Subset\Omega$. 
Observe that
$$(div(u))^2-tr(\nabla u)^2=div(div(u)u-(\nabla u) u)\ \ \ {\rm{in}}\ \ \Omega\setminus\cup_{i=1}^N \{a_i\}.$$
By \cite[Proposition 2.2.1]{LW}, we have that
$$|\nabla u|^2 \ge (divu)^2-tr(\nabla u)^2 \ \ \ {\rm{in}}\ \ \Omega\setminus\cup_{i=1}^N \{a_i\}.$$
By \cite[Theorem 1.9]{AL}, near each $a_i$, $u(x) \sim R(\frac{x-a_i}{|x-a_i|})$ for some rotation $R\in O(3)$. In particular, one has that for $r>0$ sufficiently small,
$$\Big|\int_{\partial B_r(a_i)} (div(u)u-(\nabla u) u)\cdot \nu_{B_r(a_i)}\,d\mathcal{H}^2\Big| =O(r).$$ 
Hence
\begin{eqnarray*}
&&\int_{\Omega} |\nabla u|^2\\
&&\ge \int_{\Omega \setminus \cup_{i=1}^NB_r(a_i)} (div(u))^2-tr(\nabla u)^2 \\
                           &&= \int_{\Omega \setminus \cup_{i=1}^NB_r(a_i)} div((divu)u-(\nabla u) u) \\
                           &&= \int_{\partial \Omega} (div(u)u-(\nabla u) u)\cdot \nu_{\Omega}\,d\mathcal{H}^2\\
                           &&\ - \sum_{i=1}^n\int_{\partial B_r(a_i)} (div(u)u-(\nabla u) u)\cdot \nu_{B_r(a_i)}\,d\mathcal{H}^2\\
                           &&\ge \int_{\partial \Omega} \big(div (u)-((\nabla u)\nu_\Omega) \cdot \nu_\Omega \big)\,d\mathcal{H}^2-CN r \\
                           &&= \int_{\partial \Omega} \big(div_{\partial \Omega} \nu_\Omega\big)\,d\mathcal{H}^2 -CN r
                           =\int_{\partial \Omega} H_{\partial\Omega}\,d\mathcal{H}^2-CN r.
\end{eqnarray*}
This implies \eqref{2.1} after sending $r \rightarrow 0$. 
\end{proof}

The inequality \eqref{2.1} leads us to study the minimization of the total mean curvatures. It is well-known that 
\begin{equation}
\label{key}
\int_{\partial \Omega} H_{\partial\Omega}\,d\mathcal{H}^2 \ge 4\sqrt{\pi P(\Omega)} 
\end{equation}
is true if $\Omega$ is convex, and the equality holds if and only if $\Omega$ is a ball. 
Very recently, Dalphin-Henrot-Masnou-Takahashi \cite{DHMT} proved that if $\Omega$ is a revolutionary solid and $H \ge 0$, then \eqref{key} is true,
 and the equality holds if and only if $\Omega$ is a ball. Without the mean convexity, \eqref{key} is false, see \cite{DHMT}. In the next lemma we present a proof that \eqref{key} is true if $\Omega$ is a $C^{1,1}$ star-shaped and mean convex domain. The key ingredient of the proof is based on the result by Gerhardt \cite{G90}. We remark that a more general version of \eqref{key} has been proven by Guan-Li \cite{GL}. 
Here we will sketch the proof, since it is elementary in $\mathbb R^3$.

\begin{lemma}
\label{3}
The inequality \eqref{key} holds, if $\Omega$ is $C^{1,1}$-strictly star-shaped and mean convex. 
\end{lemma}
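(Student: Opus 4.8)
The plan is to run the inverse mean curvature flow (IMCF) starting from $\partial\Omega$ and to exploit the monotonicity of a scale-invariant total-mean-curvature functional along it. Since $\Omega$ is $C^{1,1}$, strictly star-shaped and mean convex (so $H_{\partial\Omega}>0$), the key analytic input is Gerhardt's theorem \cite{G90}: writing $\partial\Omega$ as a radial graph $\rho_0$ over $\mathbb{S}^2$, the IMCF $\partial_t x=H^{-1}\nu$ admits a solution $\Sigma_t$ for all $t\in[0,\infty)$ that stays strictly star-shaped, becomes smooth instantaneously for $t>0$ by parabolic regularity, and whose rescalings $e^{-t/2}\Sigma_t$ converge to a round sphere as $t\to\infty$. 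I would first record that under IMCF the induced area measure evolves by $\partial_t\,d\mu=d\mu$, so that $A(t):=\mathcal{H}^2(\Sigma_t)=A(0)e^{t}$.

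Next I would compute the evolution of the total mean curvature. Using the standard evolution equation $\partial_t H=-\Delta_{\Sigma_t}(H^{-1})-|A_{\Sigma_t}|^2 H^{-1}$ in $\mathbb{R}^3$ (where the ambient Ricci term vanishes), together with $\partial_t\,d\mu=d\mu$ and the vanishing of $\int_{\Sigma_t}\Delta_{\Sigma_t}(H^{-1})\,d\mu$ on the closed surface $\Sigma_t$, one obtains
\[
\frac{d}{dt}\int_{\Sigma_t}H\,d\mu=\int_{\Sigma_t}\Big(H-\frac{|A_{\Sigma_t}|^2}{H}\Big)\,d\mu.
\]
Since $|A_{\Sigma_t}|^2=\kappa_1^2+\kappa_2^2\ge\frac12(\kappa_1+\kappa_2)^2=\frac12 H^2$ and $H>0$, this gives $\frac{d}{dt}\int_{\Sigma_t}H\,d\mu\le\frac12\int_{\Sigma_t}H\,d\mu$. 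Combined with $A(t)=A(0)e^{t}$, the quantity
\[
Q(t):=A(t)^{-1/2}\int_{\Sigma_t}H\,d\mathcal{H}^2
\]
then satisfies $Q'(t)\le 0$, i.e. $Q$ is non-increasing along the flow.

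To close the argument I would pass to the limit $t\to\infty$. As $Q$ is scale-invariant and $e^{-t/2}\Sigma_t$ converges to a round sphere, $\lim_{t\to\infty}Q(t)$ equals the value of $Q$ on any sphere; for a sphere of radius $R$ one has $H\equiv 2/R$, area $4\pi R^2$, hence $\int H=8\pi R$ and $Q\equiv 4\sqrt{\pi}$. Monotonicity then forces $Q(0)\ge 4\sqrt{\pi}$, which after squaring is exactly \eqref{key}. For the equality case, equality in \eqref{key} forces $Q'\equiv 0$, hence $|A_{\Sigma_t}|^2=\frac12 H^2$, i.e. $\kappa_1=\kappa_2$ everywhere, so each $\Sigma_t$ is totally umbilic and therefore a round sphere; thus $\Omega$ is a ball.

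The main obstacle is the low regularity of the initial datum: for a merely $C^{1,1}$ boundary the mean curvature lies only in $L^\infty(\partial\Omega)$, so $H$ need not be continuous at $t=0$ and the evolution identities above are rigorous only for $t>0$. I would therefore carry out the monotonicity computation on $[\epsilon,\infty)$ with $\epsilon>0$, where $\Sigma_t$ is smooth, obtaining $Q(\epsilon)\ge 4\sqrt{\pi}$, and then let $\epsilon\to0^+$. The delicate point is to show $\lim_{\epsilon\to0^+}Q(\epsilon)\le Q(0)$: since the radial graphs $\rho_\epsilon\to\rho_0$ in $C^{1,\alpha}(\mathbb{S}^2)$ with a uniform $W^{2,\infty}$ bound (so $\nabla^2\rho_\epsilon\to\nabla^2\rho_0$ weakly-$*$ in $L^\infty$) and the total mean curvature $\int_{\Sigma_\epsilon}H\,d\mathcal{H}^2$ is affine in $\nabla^2\rho_\epsilon$ with coefficients depending continuously on $(\rho_\epsilon,\nabla\rho_\epsilon)$, one gets convergence of both $A(\epsilon)\to A(0)=P(\Omega)$ and $\int_{\Sigma_\epsilon}H\to\int_{\partial\Omega}H$. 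The uniform gradient and curvature estimates and the preservation of strict star-shapedness needed to justify these limits are precisely what Gerhardt's a priori bounds in \cite{G90} supply.
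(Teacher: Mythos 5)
Your core argument coincides with the paper's: run Gerhardt's inverse mean curvature flow and exploit the monotonicity of the scale-invariant total mean curvature (your pointwise inequality $|A|^2\ge\tfrac12 H^2$ is the same as the paper's $H^2\ge 4K$, both being $(\kappa_1-\kappa_2)^2\ge 0$), then let $t\to\infty$ where the rescaled surfaces become round; the paper phrases the conclusion as a contradiction with $y(t)\to 1$ while you argue directly, which is only cosmetic. The genuine gap is in the reduction to a setting where Gerhardt's theorem applies. First, ``mean convex'' in this lemma means $H\ge 0$, not $H>0$: your parenthetical ``(so $H_{\partial\Omega}>0$)'' is unjustified, and wherever $H$ vanishes the flow speed $1/H$ is undefined, so the IMCF does not even start classically. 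The paper handles this by a separate perturbation step: it runs mean curvature flow for a short time and invokes the strong maximum principle on $\partial_t H=\Delta H+|A|^2H$ to make $H>0$ strictly while keeping strict star-shapedness, and only then starts the inverse flow.

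Second, Gerhardt's theorem in \cite{G90} concerns smooth, strictly mean convex, star-shaped initial data; it does not by itself provide existence, instantaneous smoothing, or uniform estimates up to $t=0$ for a $C^{1,1}$ initial surface whose mean curvature is merely in $L^\infty$ and possibly vanishes somewhere. Instantaneous smoothing of such weakly mean convex, low-regularity data is the content of Huisken--Ilmanen's higher-regularity theory \cite{HI2}, not of ``parabolic regularity'' or of Gerhardt's a priori bounds. Relatedly, your $\epsilon\to 0^+$ continuity argument presupposes a family of smooth, strictly mean convex, strictly star-shaped surfaces converging to $\partial\Omega$ in $C^{1,\alpha}\cap W^{2,p}$ with convergence of total mean curvature; producing such a family is nontrivial (mollifying the radial graph does not preserve $H\ge 0$), and it is exactly what the paper imports as \cite[Lemma 2.6]{HI2} in Remark \ref{hsa}. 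Once that approximation lemma is available, the delicate continuity at $t=0$ disappears: one proves \eqref{key} for the smooth approximants and passes to the limit in the inequality itself rather than in the flow. (Also note that the lemma asserts only the inequality \eqref{key}; the rigidity you sketch is not needed here, since in Corollary \ref{4} the equality case is extracted from the isoperimetric inequality.)
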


\begin{proof}
By the remark below, we may assume $\Omega\in C^\infty$. By a standard argument, we can perturb $\Omega$ so that $H>0$ everywhere. 
Indeed, represent $\partial \Omega$ as an embedding $F^0:\mathbb S^2 \to \mathbb R^3$ and consider the mean curvature flow $\{F_t :\mathbb S^2 \to \mathbb R^3: t\in [0,T)\}$, which is a family of embeddings so that 
$$\frac{\partial F}{\partial t} = H \nu_t \ \ 0<t<T; \ \ F_0=F^0, $$
where $\nu_t$ is the inward unit normal of the embedding $F_t$. It is well-known that the solution exists for a short time $T>0$. 
If $t>0$ is small, then $F_t(\mathbb S^2)$ remains to be star-shaped. The evolution of the mean curvature $H$ of $F_t(\mathbb S^2)$ is given by 
$$\frac{\partial H}{\partial t} = \Delta H + |A|^2 H,$$
where $A$ is the second fundamental form of $F_t(\mathbb S^2)$. 
Then the strong maximum principle implies that $H>0$ everywhere on $F_t(\mathbb S^2)$ for $t>0$. 
It is clear that after a small perturbation in $C^1$-norm, $\Omega$ is still strictly star-shaped.

Hence it suffices to prove \eqref{key} by assuming $H>0$ everywhere on $\partial \Omega$. 
We argue it by contradiction. Suppose there were a strictly star-shaped domain $\Omega$ with $H>0$ everywhere on $\partial \Omega$ such that 
$$\frac{\int_{\partial \Omega} H \,d\mathcal{H}^2}{ 4\sqrt{\pi P(\Omega)}}<1.$$
Representing $\partial \Omega$ as an embedding $G_0:\mathbb S^2 \to \mathbb R^3$. Now consider the inverse mean curvature flow $\{G_t :\mathbb S^2 \to \mathbb R^3: t\in [0,\infty)\}$, 
which is a family of embeddings that solves
$$\frac{\partial G}{\partial t} = \frac{1}{H} \nu_t,$$
where $\nu_t$ is the inward unit normal of the embedding $G_t$. It has been shown by Gerhardt \cite{G90} that $S_t:=G_t(\partial \Omega)$ converges to 
the unit sphere $\mathbb S^2$, up to rescalings by $e^{-t/2}$, as $t\to \infty$. Set 
$$y(t)=\frac{\int_{S_t} H\,d\mathcal{H}^2} { 4\sqrt{\pi Area(S_t)}}, \ t>0.$$
Observe that $y(t)$ is scaling-invariant. Therefore, $y(0)<1$ and $y(t) \rightarrow 1$ as $t\to\infty$. 
On the other hand, using the evolution equations under the inverse mean curvature flow we have that 
$$\frac{d}{dt}H=-\Delta H-\frac{|A|^2}{H},$$ and $$\frac{d}{dt}\sqrt{g}=\sqrt{g}, $$ where $\Delta$ is the surface Laplacian and $g$ is the metric on surface $S_t$ induced by Euclidean metric
in $\mathbb R^3$. Direct calculations imply 
\begin{eqnarray*}
\frac{d}{dt}\left(\frac{\int_{S_t}H\,d\mathcal{H}^2}{4\sqrt{\pi P(\Omega)}}\right)&=& \Big(\int_{S_t}\big(H-\frac{|A|^2}{H}\big)\,d\mathcal{H}^2\Big)\frac{1}{4\sqrt{\pi Area(S_t)}}-\frac{\int_{S_t}H\,d\mathcal{H}^2}{8\sqrt{\pi Area(S_t)}}\\
&=& \frac{1}{4\sqrt{\pi Area(S_t)}}\left(\int_{S_t}\frac{2K}{H}\,d\mathcal{H}^2-\frac{1}{2}\int_{S_t}H\,d\mathcal{H}^2\right)\\
&=& \frac{1}{4\sqrt{\pi Area(S_t)}}\int_{S_t} \frac{4K-H^2}{2H}\, d\mathcal{H}^2 \le 0,
\end{eqnarray*}
since $H^2 \ge 4K$, here $K$ is the Gauss curvature of $S_t$. 
Therefore,  $y(t) \le y(0)<1$ for all $t>0$. We get a desired contradiction.
\end{proof}

%\begin{remark}
%To our knowledge, the idea of using curvature flows to prove geometric inequalities might trace back to the work of Topping in \cite{To}, where he use the curve shortening flow and mean curvature flow to prove the isoperimetric inequalities for $n=1$ and $n=2$ where $n$ is the dimension of the manifold. Later Schulze use the power $k$-mean curvature flow to prove isoperimetric inequalites.
%\end{remark}

\begin{remark}
\label{hsa}
\eqref{key} is actually  true for any $C^1$-strictly star-shaped surface with bounded nonnegative generalized mean curvature, in particular for a $C^{1,1}$-mean convex surface.
%In the work of Huisken and Ilmanen \cite{HI2}, they prove that under inverse mean curvature flow, the mean curvature of a strictly star-shaped hypersurface has a strictly positive lower bound in finite time which only depends on the gross geometric properties of the initial hypersurface, then they are able to show the global weak solution exist.
Indeed, by \cite[Lemma 2.6]{HI2}, we can find a family of smooth strictly star-shaped mean convex hypersurfaces converging to the surface uniformly in $C^{1,\alpha} \cap W^{2,p}$ for $0<\alpha<1$ and $1<p<\infty$ so that the total mean curvature of the smooth surfaces converges to the total mean curvature of the original surface. 
We refer the reader to \cite{HI2} for the detail. 
\end{remark}

By Lemma \ref{3} and the isoperimetric inequality $P(\Omega) \ge 4\pi(\frac{3}{4 \pi}|\Omega|)^{2/3}$,  we immediately have

\begin{corollary} 
\label{4}It holds that
\begin{eqnarray*}
&&\inf\{\int_{\Omega}|\nabla u|^2: \mbox{$\Omega$ is $C^{1,1}$-star-shaped, mean convex}, |\Omega|=|B_1|, u \in H^1(\Omega, \mathbb S^2), \\
&&\qquad\qquad u=\nu_\Omega \, \mbox{on $ \partial \Omega$}\}\ge 8\pi,
\end{eqnarray*}
and the equality holds if and only if $\Omega=B_1$, up to translation and rotation.
\end{corollary}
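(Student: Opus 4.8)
The plan is to concatenate the two preceding lemmas with the isoperimetric inequality, since all the genuine analytic work has already been done. First I would fix an arbitrary admissible pair $(\Omega,u)$, that is, a $C^{1,1}$-strictly star-shaped, mean convex domain $\Omega$ with $|\Omega|=|B_1|$ together with $u\in H^1(\Omega,\mathbb S^2)$ satisfying $u=\nu_\Omega$ on $\partial\Omega$. Lemma \ref{2} gives $\int_\Omega|\nabla u|^2\ge\int_{\partial\Omega}H_{\partial\Omega}\,d\mathcal{H}^2$, and Lemma \ref{3}, applicable precisely because $\Omega$ is $C^{1,1}$-strictly star-shaped and mean convex, gives $\int_{\partial\Omega}H_{\partial\Omega}\,d\mathcal{H}^2\ge 4\sqrt{\pi P(\Omega)}$.

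Next I would insert the isoperimetric inequality $P(\Omega)\ge 4\pi(\tfrac{3}{4\pi}|\Omega|)^{2/3}$. Since $|\Omega|=|B_1|=\tfrac{4}{3}\pi$, the right-hand side collapses to $4\pi$, so $P(\Omega)\ge 4\pi$. As $t\mapsto 4\sqrt{\pi t}$ is increasing, this yields $4\sqrt{\pi P(\Omega)}\ge 4\sqrt{4\pi^2}=8\pi$. Chaining the three estimates gives $\int_\Omega|\nabla u|^2\ge 8\pi$ for every admissible $u$, and taking the infimum over $u$ (and then over $\Omega$) establishes the claimed lower bound.

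For the equality discussion I would argue in both directions. If $\Omega=B_1$, the explicit map $u(x)=x/|x|$ is admissible, since $u=\nu_{B_1}$ on $\partial B_1$, and a direct computation gives $|\nabla u|^2=2/|x|^2$, whence $\int_{B_1}|\nabla u|^2=2\int_0^1 4\pi\,dr=8\pi$; combined with the lower bound this shows the per-domain infimum equals $8\pi$. Conversely, if the infimum equals $8\pi$, then every inequality in the chain must be an equality; in particular $4\sqrt{\pi P(\Omega)}=8\pi$ forces $P(\Omega)=4\pi$, which together with $|\Omega|=\tfrac{4}{3}\pi$ saturates the isoperimetric inequality and hence forces $\Omega$ to be a ball, necessarily $B_1$ up to translation (rotation acting trivially on a ball). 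Conversely, any $\Omega\neq B_1$ satisfies $P(\Omega)>4\pi$ and thus $\int_\Omega|\nabla u|^2\ge 4\sqrt{\pi P(\Omega)}>8\pi$, so strict inequality holds away from $B_1$.

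I do not anticipate a real obstacle, since the content resides entirely in Lemmas \ref{2} and \ref{3}; the only points deserving care are checking that the numerical constants combine to exactly $8\pi$ and that the equality case propagates correctly back through the chain. The most delicate of these is confirming that $u=x/|x|$ actually realizes the value $8\pi$ on $B_1$, which I verify by the explicit integration above and which is consistent with equality holding simultaneously in Lemmas \ref{2} and \ref{3} for the unit ball.
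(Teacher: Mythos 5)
Your proposal is correct and follows essentially the same route as the paper: chaining Lemma \ref{2}, Lemma \ref{3}, and the isoperimetric inequality to get the bound $8\pi$, with the equality case settled by the explicit competitor $x/|x|$ on $B_1$ and the rigidity of the isoperimetric inequality. You merely spell out details (the numerical bookkeeping and the equality propagation) that the paper compresses into ``we immediately have,'' deferring the computation $\int_{B_1}|\nabla(x/|x|)|^2=8\pi$ to the proof of Theorem \ref{dafeiji1}.
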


As a consequence, we have 

\begin{theorem}
\label{dafeiji1}
The Problem (C) over $C^{1,1}$-star-shaped and mean convex domains is uniquely achieved at $\Omega=B_1$ and $u(x)=\frac{x}{|x|}$.
\end{theorem}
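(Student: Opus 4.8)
The plan is to prove the sharp lower bound $E(u,\Omega)\ge 8\pi+4\pi\mu$ over the whole admissible class and then show it is saturated only by $(B_1,x/|x|)$. I would first record that, since $|u|=|\nu_\Omega|=1$, the contact angle constraint $u\cdot\nu_\Omega\equiv 1$ of Problem C is equivalent to $u=\nu_\Omega$ on $\partial\Omega$, so the functional reads $E(u,\Omega)=\int_\Omega|\nabla u|^2+\mu\,\mathcal H^2(\partial\Omega)$ and its Dirichlet part is exactly the quantity estimated in Corollary \ref{4}.

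For the lower bound I would argue termwise. For every admissible pair, Lemma \ref{2}, Lemma \ref{3}, and the isoperimetric inequality under $|\Omega|=|B_1|$ together give $\int_\Omega|\nabla u|^2\ge 8\pi$ (this is precisely Corollary \ref{4}); independently, the isoperimetric inequality gives $\mathcal H^2(\partial\Omega)=P(\Omega)\ge 4\pi$. Adding these yields $E(u,\Omega)\ge 8\pi+4\pi\mu$. A direct computation confirms that $(B_1,x/|x|)$ attains this value: from $|\nabla(x/|x|)|^2=2/|x|^2$ one gets $\int_{B_1}|\nabla(x/|x|)|^2=8\pi$, while $P(B_1)=4\pi$ and $x/|x|=\nu$ on $\partial B_1$.

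Next I would analyze the equality case. Suppose $E(u,\Omega)=8\pi+4\pi\mu$. Writing $E-\bigl(8\pi+4\pi\mu\bigr)=\bigl(\int_\Omega|\nabla u|^2-8\pi\bigr)+\mu\bigl(P(\Omega)-4\pi\bigr)$ as a sum of two nonnegative quantities forces $\int_\Omega|\nabla u|^2=8\pi$ (and $P(\Omega)=4\pi$ when $\mu>0$). The equality characterization in Corollary \ref{4} then forces $\Omega$ to be a ball of volume $|B_1|$, i.e.\ $\Omega=B_1$ after a translation (the rotational ambiguity is vacuous for a ball). It remains to identify $u$: it is a minimizing $\mathbb S^2$-valued map on $B_1$ with boundary trace the identity and Dirichlet energy equal to the minimal value $8\pi$.

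The main obstacle is the uniqueness $u=x/|x|$. I would extract it from the equality case of Lemma \ref{2}: saturation of the pointwise inequality $|\nabla u|^2\ge(\operatorname{div}u)^2-\operatorname{tr}(\nabla u)^2$ forces $\nabla u$ to act conformally, with equal and positively oriented singular values, on the tangent image $u^\perp$ — exactly the structure realized by $x/|x|$, for which $\nabla(x/|x|)=\tfrac1{|x|}(I-\widehat x\,\widehat x^{\,T})$. Combined with the singularity classification $u\sim R\bigl(\tfrac{x-a_i}{|x-a_i|}\bigr)$ from \cite{AL} and the energy quantization at $8\pi$ per degree-one singularity, the degree-one identity boundary data leaves room for a single interior singularity; matching the boundary trace then pins the singular point to the origin with trivial rotation $R=I$, giving $u=x/|x|$. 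Alternatively, one may invoke the known uniqueness of the minimizing harmonic map into $\mathbb S^2$ with identity boundary data (of Brezis–Coron–Lieb / Jäger–Kaul type). I expect this last step — converting the pointwise conformality together with the boundary data into a genuine uniqueness statement — to be the technically delicate point, whereas the reduction to the ball follows directly from the results already established.
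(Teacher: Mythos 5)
Your proposal is correct and matches the paper's argument: the lower bound and the identification of the domain come from Corollary \ref{4} (whose proof already contains the isoperimetric step you make explicit termwise), and attainment is the same direct computation $\int_{B_1}|\nabla(x/|x|)|^2=8\pi$. For the uniqueness of the map, the paper does exactly what you offer as the ``alternative'': it cites \cite[Theorem 7.1]{BCL} for uniqueness of the energy-minimizing $\mathbb S^2$-valued map on $B_1$ with boundary data $x/|x|$, so your more delicate primary route via pointwise conformality, the Almgren--Lieb singularity classification, and energy quantization is not needed.
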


\begin{proof}
By direct calculations, 
$$\int_{B_1} |\nabla (\frac{x}{|x|})|^2 =\int_{B_1} \frac{2}{|x|^2}=8\pi.$$ 
Hence by the first statement in Corollary \ref{4}, \eqref{0} is attained at $(B_1, \frac{x}{|x|})$. 
The uniqueness follows from the last statement of Corollary \ref{4} and \cite[Theorem 7.1]{BCL}.
\end{proof}

\begin{remark}
\label{dafeiji2}
Huisken first proves that \eqref{key} holds if $\Omega$ is $C^{1,1}$-outer minimal (not necessarily connected), though it seems that he didn't publish it. See also 
Freire-Schwartz \cite[Theorem 5]{FS}. Hence the same result as in Theorem \ref{dafeiji1} holds in the  class of $C^{1,1}$-outer minimal open sets. 
By \cite{DHMT}, the same result as in Theorem \ref{dafeiji1} holds in the class of smooth domains of revolution. 
\end{remark}

\bibliographystyle{amsalpha}

\end{document}